\title{Positively curved Finsler metrics on vector bundles II}
\author{Kuang-Ru Wu}
\newcommand{\RN}[1]{%
  \textup{\uppercase\expandafter{\romannumeral#1}}%
}
\theoremstyle{plain}
\numberwithin{equation}{section}
\begin{document}

\date{}

\parskip=6pt

\maketitle
\begin{abstract}
We show that if $E$ is an ample vector bundle of rank at least two with some curvature bound on  $O_{P(E^*)}(1)$, then $E^*\otimes \det E$ is Kobayashi positive. The proof relies on comparing the curvature of $(\det E^*)^k$ and $S^kE$ for large $k$ and using duality of convex Finsler metrics. Following the same thread of thought, we show if $E$ is ample with similar curvature bounds on  $O_{P(E^*)}(1)$ and $O_{P(E\otimes \det E^*)}(1)$, then $E$ is Kobayashi positive. With additional assumptions, we can furthermore show that $E^*\otimes \det E$ and $E$ are Griffiths positive.
\end{abstract}

\section{Introduction}
Let $E$ be a holomorphic vector bundle of rank $r$ over a compact complex manifold $X$ of dimension $n$. We denote the dual bundle by $E^*$ and its projectivized bundle by $P(E^*)$. The vector bundle $E$ is said to be ample if the line bundle $O_{P(E^*)}(1)$ over $P(E^*)$ is ample. On the other hand, $E$ is called Griffiths positive if $E$ carries a Griffiths positive Hermitian metric. Moreover, $E$ is called Kobayashi positive if $E$ carries a strongly pseudoconvex Finsler metric whose Kobayashi curvature is positive (see \cite[Section 2]{wu_2022} for a quick review).

There are two conjectures made by Griffiths \cite{Griff69} and Kobayashi \cite{Negfinsler} regarding the equivalence of ampleness and positivity:
\begin{align*}
    &(1) \text{ If $E$ is ample, then $E$ is Griffiths positive.  }\\
    &(2) \text{ If $E$ is ample, then $E$ is Kobayashi positive.  }
\end{align*}
Not much is known about these two conjectures save for $n=1$ by \cite{Umemura,CampanaFlenner} (see recent progress \cite{Berndtsson09,MourouganeTaka,toric,positivityandvanishingthmliu,LY15,naumann2017approach,FengLiuWan, demailly2020hermitianyangmills, finski2020monge, pingali2021note}). Note that the converse of each conjecture is true (\cite{FengLiuWan,wu_2022}).

By Kodaira's embedding theorem, ampleness of a line bundle is equivalent to the existence of a positively curved metric on the line bundle. So, the conjectures of Griffiths and Kobayashi can be rephrased as: given a positively curved metric on $O_{P(E^*)}(1)$, can we construct a positively curved Hermitian/Finsler metric on $E$? In this paper, we show that it is so, by imposing curvature bounds on tautological line bundles of $P(E^*)$ and $P(E)$. Since Hermitian metrics on $O_{P(E^*)}(1)$ are in one-to-one correspondence with Finsler metrics on $E^*$, these curvature bounds can also be written in terms of Kobayashi curvature.

We first consider a relevant case where the picture is clearer. It is known that, for rank of $E$ at least 2, 
\begin{align*}
    &(1) \text{ If $E$ is Griffiths positive then $E^*\otimes \det E$ with the induced metric is Griffiths positive.}\\
    &(2) \text{ If $E$ is ample then $E^*\otimes \det E$ is ample. }
\end{align*}
The first fact can be found in \cite[P. 346, Theorem 9.2]{demailly1997complex}, and the second in \cite[Corollary 5.3]{Hart66}. If we follow the guidance of Griffiths and Kobayashi, we would ask whether ampleness of $E$ implies Griffiths/Kobayashi positivity of $E^*\otimes \det E$ for $r \geq 2$. Our first result is that this can be achieved by imposing curvature bounds on $O_{P(E^*)}(1)$. 

Let $q:P(E^*)\to X$ be the projection. Let $g$ be a metric on $O_{P(E^*)}(1)$ whose curvature restricted to a fiber $\Theta(g)|_{P(E^*_z)}$ is positive for all $z\in X$. For a tangent vector $\eta\in T^{1,0}_{z}X$ and a point $[\zeta]\in P(E^*_z)$, we consider tangent vectors $\tilde{\eta}$ to $P(E^*)$ at $(z,[\zeta])$ such that $q_*(\tilde{\eta})=\eta$, namely the lifts of $\eta$ to $T^{1,0}_{(z,[\zeta])}P(E^*)$. Then we define the function
\begin{equation}\label{inf}
     (\eta,[\zeta])\mapsto \inf_{q_*(\tilde{\eta})=\eta} \Theta(g)(\tilde{\eta},\bar{\tilde{\eta}})
\end{equation}
the inf taken over all the lifts of $\eta$ to $T^{1,0}_{(z,[\zeta])}P(E^*)$. This inf is actually a minimum, see (\ref{min}). On the other hand, since such a metric $g$ corresponds to a strongly pseudoconvex Finsler metric on $E^*$, and if we denote its Kobayashi curvature by $\theta(g)$ a $(1,1)$-form on $P(E^*)$, then 
\begin{equation}\label{inf=kob}
    \inf_{q_*(\tilde{\eta})=\eta} \Theta(g)(\tilde{\eta},\bar{\tilde{\eta}})=-\theta(g)(\tilde{\eta},\bar{\tilde{\eta}}).
\end{equation}
The term on the right is independent of the choice of lifts $\tilde{\eta}$ (we will review Finsler metrics and prove (\ref{inf=kob}) in Subsection \ref{system}).

\begin{theorem}\label{thm 1}
Assume $r\geq 2$ and the line bundle $O_{P(E^*)}(1)$ has a positively curved metric $h$ and a metric $g$ with $\Theta(g)|_{P(E^*_z)}>0$ for all $z\in X$. If there exist a Hermitian metric $\Omega$ on $X$ and a constant $M\in [1,r)$ such that the following inequalities of $(1,1)$-forms hold
\begin{align}
    M q^*\Omega &\geq -\theta(g)\text{ and }\label{thm 1 assumption}\\
     q^*\Omega &\leq -\theta(h) ,\label{thm 1 assumption 2}
\end{align}
then $E^*\otimes \det E$ is Kobayashi positive. 
\end{theorem}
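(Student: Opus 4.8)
The plan is to produce the required strongly pseudoconvex Finsler metric on $E^*\otimes\det E$ only after passing to a large symmetric power, where the slack $r-M>0$ becomes exploitable, and then to transfer positivity back by duality together with a Veronese embedding.

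\textbf{Reformulation.} A Finsler metric on $E^*\otimes\det E$ is the same datum as a metric on $O_{P(E^*)}(1)\otimes q^*\det E^*$ over $P(E^*)$, and a Finsler metric on $S^k(E^*\otimes\det E)=S^kE^*\otimes(\det E)^k$ is the same as a metric on $O_{P(S^kE^*)}(1)\otimes\pi_k^*(\det E^*)^k$ over $P(S^kE^*)$. Since the $k$-th Veronese embedding $P(E^*)\hookrightarrow P(S^kE^*)$ carries $O(1)$ to $O(k)$, and since restricting a $(1,1)$-form to a submanifold only raises the fibrewise infimum over lifts that computes $-\theta$, Kobayashi positivity of $S^k(E^*\otimes\det E)$ descends to $E^*\otimes\det E$. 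Hence it suffices to make $S^k(E^*\otimes\det E)$ Kobayashi positive for \emph{some} large $k$. By the duality of convex Finsler metrics, this in turn follows once the dual bundle $(S^k(E^*\otimes\det E))^*=S^kE\otimes(\det E^*)^k$ carries a \emph{convex} Finsler metric of \emph{negative} Kobayashi curvature. This reduction also explains why large $k$ is useful: the Kobayashi curvature is additive under tensoring with a line bundle, the symmetric power scales it by $k$, and the determinant construction below contributes a fibrewise average, so the inequality to be checked is homogeneous of degree $k$ in each term and the decisive quantity is the ratio $M/r$ rather than the size of $\det E$.

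\textbf{Construction.} Because $\Theta(g)|_{P(E^*_z)}>0$, the metric $g$ corresponds to a strongly pseudoconvex Finsler metric $\psi$ on $E^*$. A strongly pseudoconvex Finsler metric is straightened by the Veronese map, so for $k$ large its $k$-th symmetric power on $S^kE^*$ is convex; dualizing gives a convex Finsler metric $F_k$ on $S^kE$ whose $k$-th root reproduces $g$ along the Veronese locus, and there its Kobayashi curvature equals $k$ times $-\theta(g)$, hence is $\le kMq^*\Omega$ by \eqref{thm 1 assumption}. On the other hand, the Euclidean volume of the unit ball of $\psi$ in the fibre $E^*_z$ depends smoothly on $z$ and defines a metric on $\det E^*$ whose curvature is, up to a universal constant, the fibre integral $q_*\!\big(\Theta(g)^r\big)$ — a closed representative of the Chern class of $\det E^*$ which, being a fibrewise average of the minimal‑lift curvature $-\theta(g)$, is again $\le Mq^*\Omega$. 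I take the candidate metric on $S^kE\otimes(\det E^*)^k$ to be the tensor product of $F_k$ with the $k$-th power of this determinant metric; assumption \eqref{thm 1 assumption 2} supplies the matching lower bound from $h$. Combining these, using additivity of the Kobayashi curvature under line-bundle twists and the fact that the determinant term contributes with the full weight $r$ against the single weight $M<r$ coming from $F_k$, the Kobayashi curvature of this convex Finsler metric should be strictly negative once $k$ is large; strong pseudoconvexity is preserved throughout, since symmetric powers, duality, and line-bundle twists all preserve it. Dualizing and restricting along the Veronese then yields the theorem.

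\textbf{Main obstacle.} The delicate point is controlling the Kobayashi curvature of $F_k$ (equivalently of the symmetric power of $\psi$) \emph{away} from the Veronese locus: along the Veronese it is exactly $k(-\theta(g))\le kMq^*\Omega$, but at a general point of $P(S^kE)$ the infimum defining $-\theta$ runs over a much larger affine space of lifts, and one must still dominate it by $kMq^*\Omega$ up to lower-order terms in $k$, so that it is absorbed by the $k$-fold determinant contribution. Here $M<r$ is exactly what is needed — the extra transverse directions cost at most a factor $r$, and the estimate closes because $M$ stays below $r$. Turning this into a quantitative bound that is uniform in $k$, and verifying that the convexification introduced when straightening the indicatrix of $\psi$ does not spoil the upper bound, is the technical heart of the argument.
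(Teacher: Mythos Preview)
Your proposal has the right broad shape --- pass to a large symmetric power, build something negative on $S^kE\otimes(\det E^*)^k$, then dualize --- but two of the three ingredients do not work as written.

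First, the determinant metric. You build it from $g$ (via the volume of the $\psi$-indicatrix) and bound its curvature by a fibre average of $-\theta(g)$, which you control only from above by $M\Omega$ using \eqref{thm 1 assumption}. But for the tensor product to be negative you need $\Theta(\det E)\gtrsim r\,\Omega$, i.e.\ a \emph{lower} bound on the determinant curvature, and \eqref{thm 1 assumption} gives only an \emph{upper} bound on $-\theta(g)$; there is no lower bound on $-\theta(g)$ at all. The metric $h$ never actually enters your construction; the sentence ``assumption \eqref{thm 1 assumption 2} supplies the matching lower bound from $h$'' has no referent. In the paper the determinant metric $H$ on $\det E$ is built from $h$ via an $L^2$-integral over the fibres of $q:P(E^*)\to X$, and the bound $-\Theta(H)\le -(r-\tfrac{r-M}{4})\,\Omega/(1+\varepsilon)$ is obtained from Berndtsson's positivity inequality together with \eqref{thm 1 assumption 2}. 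This is precisely where the factor $r$ (rather than $M$) appears, and it is the only place $h$ is used.

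Second, the Veronese obstacle you flag is not a loose end --- it is the whole difficulty, and your sketch offers no mechanism to close it (nor is the claim that symmetric powers of a strongly pseudoconvex Finsler metric become convex for large $k$ a standard fact). The paper bypasses the issue entirely by working with \emph{Hermitian} metrics on $S^kE$: it takes the $L^2$-metric $H_k(u,v)=\int_{P(E^*_z)} g^k(\Phi_{k,z}u,\Phi_{k,z}v)\,\omega_z^{r-1}$ and shows directly that $H_k\otimes(H^*)^k$ is Griffiths negative on all of $S^kE\otimes(\det E^*)^k$ for $k$ large --- the curvature of $H_k$ is $\le k(M+\tfrac{r-M}{2})\,\Omega/(1-\varepsilon)$ from \eqref{thm 1 assumption}, and the determinant factor contributes the $-kr$ term from the previous paragraph. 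Because $H_k$ is an $L^2$-integral, its $k$-th root is automatically a \emph{convex} Finsler metric on $E\otimes\det E^*$; Griffiths negativity of $H_k\otimes(H^*)^k$ makes this $k$-th root strictly plurisubharmonic on the total space minus the zero section, and after a small Hermitian perturbation its dual is the desired Kobayashi positive Finsler metric on $E^*\otimes\det E$. There is no Finsler metric on $S^kE$, no off-Veronese estimate to perform, and the convexity comes for free from the $L^2$-construction rather than from any straightening.
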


We can of course choose $g$ to be $h$ in Theorem \ref{thm 1}, but it does not seem to help too much except simplifying the statement. The proof of Theorem \ref{thm 1} relies on two observations. First, starting with $g$ and $h$ on $O_{P(E^*)}(1)$, we construct two Hermitian metrics on $S^kE$ and $\det E$ respectively. The curvature of the induced metric on $S^kE\otimes (\det E^*)^k$ can be shown to be Griffiths negative for $k $ large (see Section \ref{sec pf of thm1} for details). The second observation which we use in \cite{wu_2022} already is that since the induced metric on $S^kE\otimes (\det E^*)^k$ is basically an $L^2$-metric, its $k$-th root is a convex Finsler metric on $E\otimes \det E^*$ which is also strongly plurisubharmonic on the total space minus the zero section. After perturbing this Finsler metric and taking duality, we get a convex and strongly pseudoconvex Finsler metric on $E^*\otimes \det E$ whose Kobayashi curvature is positive. So the bundle $E^*\otimes \det E$ is Kobayashi positive. Notice that the Finsler metric we find is actually convex.

Now let us go back to the original conjecture of Kobayashi and adapt the proof of Theorem \ref{thm 1} to this case. Let $p:P(E)\to X$ be the projection. We recall under the canonical isomorphism $P(E\otimes \det E^*)\simeq P(E)$, the line bundle $O_{P(E\otimes \det E^*)}(1)$ corresponds to the line bundle $O_{P(E)}(1)\otimes p^*\det E$ (see \cite[Page 86, Prop. 3.6.21]{MR909698}). Let $g$ be a metric on $O_{P(E)}(1)\otimes p^*\det E$ with $\Theta(g)|_{P(E_z)}>0$ for all $z\in X$. For a tangent vector $\eta\in T^{1,0}_{z}X$ and a point $ [\xi]\in P(E_{z})$, we similarly have $$ (\eta,[\xi])\mapsto \inf_{p_*(\eta')=\eta} \Theta(g)(\eta',\bar{\eta}');$$ here $\eta'$ are the lifts of $\eta$ to $T^{1,0}_{(z,[\xi])}P(E)$. Meanwhile, such a metric $g$ corresponds to a strongly pseudoconvex Finsler metric on $E\otimes \det E^*$, and we denote its Kobayashi curvature by $\theta(g)$ a $(1,1)$-form on $P(E)$. As before, 
\begin{equation}
    \inf_{p_*(\eta')=\eta} \Theta(g)(\eta',\bar{\eta}')=-\theta(g)(\eta',\bar{\eta}').
\end{equation}

\begin{theorem}\label{thm 2}
Assume $r\geq 2$ and $O_{P(E^*)}(1)$ has a positively curved metric $h$ and $O_{P(E)}(1)\otimes p^*\det E$ has a metric $g$ with $\Theta(g)|_{P(E_z)}>0$ for all $z\in X$. If there exist a Hermitian metric $\Omega$ on $X$ and a constant $M\in [1,r)$ such that
\begin{align}\label{thm 2 assumption}
    M p^*\Omega &\geq -\theta(g) \text{ and }\\   q^* \Omega &\leq -\theta(h),
\end{align}
then $E$ is Kobayashi positive.
\end{theorem}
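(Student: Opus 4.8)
The plan is to mimic the proof of Theorem~\ref{thm 1} with $E$ playing the role that $E^*\otimes \det E$ played there, using the canonical isomorphism $P(E\otimes \det E^*)\simeq P(E)$. First I would build, out of the metric $g$ on $O_{P(E)}(1)\otimes p^*\det E = O_{P(E\otimes \det E^*)}(1)$, a Hermitian metric on $S^k(E\otimes \det E^*)$ for large $k$; concretely, $g$ corresponds to a strongly pseudoconvex Finsler metric on $E\otimes \det E^*$, its $k$-th symmetric power gives a metric on $O_{P(E\otimes\det E^*)}(k)$ which, pushed forward by the projection, yields an $L^2$-type Hermitian metric on $S^k(E\otimes \det E^*)$. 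Separately, from the positively curved metric $h$ on $O_{P(E^*)}(1)$ I would manufacture a Hermitian metric on $\det E$ (as in Theorem~\ref{thm 1}, via the fiberwise integral producing a metric on $\det E^*$, hence its inverse on $\det E$), and raise it to a suitable power to twist $S^k(E\otimes \det E^*) \cong S^kE \otimes (\det E^*)^k$ back into something about $S^kE$.

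The curvature computation is the heart: I would show that the induced metric on $S^k E \otimes (\det E^*)^{k+ak}$ — or whichever normalizing power makes the bookkeeping with $\det E^*\otimes \det E^*{}^{\otimes k}$ consistent — is Griffiths negative for $k$ large, exactly as in Section~\ref{sec pf of thm1}. Here is where the hypotheses \eqref{thm 2 assumption} and $q^*\Omega \le -\theta(h)$ enter: the lower bound $-\theta(g) \le M p^*\Omega$ with $M<r$ controls the positive (bad) part of the curvature of the $L^2$-metric on $S^k(E\otimes\det E^*)$ from above by $\approx kM\,p^*\Omega$ in the horizontal directions, while $q^*\Omega \le -\theta(h)$ forces the curvature contribution of the $\det E^*$-twist to dominate it — the gap is governed by $r - M > 0$, and the $\det E$ factor contributes with the right sign and roughly the right size (a factor of $r$ versus $M$, after accounting for ranks) because $\det E = (S^1E)$'s determinant scales like the trace. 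The strong plurisubharmonicity on the total space minus the zero section comes, as before, from the $L^2$-metric being strictly plurisubharmonic in the fiber directions (Berndtsson-type positivity / the fiberwise positivity $\Theta(g)|_{P(E_z)}>0$).

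Once Griffiths negativity of the induced metric on $S^k E\otimes (\det E^*)^{N(k)}$ is established together with plurisubharmonicity off the zero section, I would take the $k$-th root: this is a convex Finsler metric on $E \otimes \det E^* $ (up to the determinant normalization it is really a Finsler metric on $E$ twisted appropriately) which is strongly plurisubharmonic on the total space minus the zero section. After a small perturbation to make the psh property strict and the fiber metric smooth, I would dualize: the dual of a convex, strongly pseudoconvex Finsler metric is again convex and strongly pseudoconvex, and negativity of the Kobayashi curvature downstairs dualizes to positivity upstairs. Unwinding the twist, this produces a convex, strongly pseudoconvex Finsler metric on $E$ with positive Kobayashi curvature, i.e.\ $E$ is Kobayashi positive.

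The main obstacle I anticipate is the curvature estimate identifying the precise power $N(k)$ of $\det E^*$ and checking that the constant $M<r$ is exactly the threshold that makes the Griffiths-negative estimate go through: one must match the $L^2$-curvature bound (which behaves like $k\,M\,p^*\Omega$ plus a bounded term plus a semi-negative fiber-curvature term) against the twist curvature (which behaves like $-k\,r\,q^*\Omega$ after transferring $h$ from $P(E^*)$ to $X$ and using $q^*\Omega \le -\theta(h)$), and to ensure the discrepancy between working on $P(E)$ versus $P(E^*)$ (the two different projectivizations) does not spoil the comparison. A secondary technical point is carefully tracking the canonical isomorphism $P(E\otimes\det E^*)\simeq P(E)$ through the symmetric-power and pushforward constructions so that the final Finsler metric genuinely lives on $E$ and not merely on $E\otimes\det E^*$.
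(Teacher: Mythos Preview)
Your overall strategy matches the paper's --- build an $L^2$-metric from $g$, a Berndtsson-type metric on $\det E$ from $h$, compare curvatures using $M<r$ to get Griffiths negativity, take the $k$-th root, perturb, dualize --- and your curvature heuristics ($\approx kM\,\Omega$ from the $g$-side versus $\approx kr\,\Omega$ from the $h$-side) are correct. But you have the duals reversed at the crucial identification, and this propagates to the end. The direct image of $O_{P(E\otimes\det E^*)}(k)$ is \emph{not} $S^k(E\otimes\det E^*)$: for any bundle $F$ one has $q_*O_{P(F^*)}(k)=S^kF$, so with $F^*=E\otimes\det E^*$ the $L^2$-metric $\boldsymbol H_k$ lives on $S^k(E^*\otimes\det E)=S^kE^*\otimes(\det E)^k$. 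Likewise the Berndtsson construction from $h$ produces a positively curved metric $H$ directly on $\det E$, not on $\det E^*$. Twisting $\boldsymbol H_k$ by $(H^*)^k$ --- so the normalizing power is simply $N(k)=k$, no mystery --- lands on $[S^kE^*\otimes(\det E)^k]\otimes(\det E^*)^k\simeq S^kE^*$, and it is this Hermitian metric on $S^kE^*$ that is shown Griffiths negative for large $k$.

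Consequently the $k$-th root is a convex Finsler metric on $E^*$ (not on $E\otimes\det E^*$), strongly plurisubharmonic off the zero section; after perturbation one dualizes \emph{once} to obtain the desired convex, strongly pseudoconvex, Kobayashi-positive Finsler metric on $E$. There is no residual determinant twist to unwind at the Finsler level: the untwisting happens at the Hermitian stage before taking roots, so your ``secondary technical point'' dissolves. With these directions corrected, your proposal is exactly the paper's proof.
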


Since ampleness of $E$ implies ampleness of $E^*\otimes \det E$, one choice for $g$ in Theorem \ref{thm 2} is a positively curved metric on $O_{P(E)}(1)\otimes p^*\det E$, but how much this choice helps is unknown to us. The proof of Theorem \ref{thm 2} follows the same scheme as in Theorem \ref{thm 1}. We first use $h$ and $g$ to construct Hermitian metrics on $\det E$ and $S^kE^* \otimes (\det E)^k$ respectively. The induced metric on  $[S^kE^* \otimes (\det E)^k]\otimes (\det E^*)^k$ is Griffiths negative for $k$ large (see Section \ref{sec pf o thm2}). Then by taking $k$-th root, perturbing, and taking duality, we obtain a convex, strongly pseudoconvex, and Kobayashi positive Finsler metric on $E$.

The conclusions in Theorems \ref{thm 1} and \ref{thm 2} are about Finsler metrics. For their Hermitian counterpart, we need additional assumptions. The reason is that in Theorems \ref{thm 1} and \ref{thm 2}, taking large tensor power of various bundles helps us eliminate the curvature of the relative canonical bundles $K_{P(E^*)/X}$ and $K_{P(E)/X}$, and after getting the desired estimates we take $k$-th root to produce Finsler metrics. However, the step of taking $k$-th root produces only Finsler, not Hermitian metrics. So the first step of taking large tensor power is not allowed if one wants Hermitian metrics.

Let us be more precise. For a metric $g$ on $O_{P(E^*)}(1)$ with $\Theta(g)|_{P(E^*_z)}>0$ for all $z\in X$, we denote $\Theta(g)|_{P(E^*_z)}$ by $\omega_z$ for the moment. The relative canonical bundle $K_{P(E^*)/X}$ has a metric induced from $\{\omega_z^{r-1} \}_{z\in X}$, and we denote the corresponding curvature by $\gamma_g$, a $(1,1)$-form on $P(E^*)$. For $\eta\in T^{1,0}_z X$ and $[\zeta]\in P(E^*_z)$, we consider $$(\eta,[\zeta])\mapsto \sup_{q_*(\tilde{\eta})=\eta} \gamma_g(\tilde{\eta},\bar{\tilde{\eta}})$$ the sup taken over all the lifts of $\eta $ to $T^{1,0}_{(z,[\zeta])}P(E^*)$. The sup is a maximum under a suitable assumption, see (\ref{min'}). Moreover, for $z\in X$, the restriction $\gamma_g|_{P(E^*_z)}$ is actually the negative of Ricci curvature $-\Ric_{\omega_z}$ of the metric $\omega_z$ on $P(E^*_z)$.

Any Hermitian metric $G$ on $E^*$ will induce a metric $g$ on $O_{P(E^*)}(1)$ with $\Theta(g)|_{P(E^*_z)}>0$ and $\gamma_g|_{P(E^*_z)}<0$ for all $z\in X$. Indeed, in this case, $\Theta(g)|_{P(E^*_z)}$ is the Fubini--Study metric and its Ricci curvature is positive, so $\gamma_g|_{P(E^*_z)}<0$. Furthermore, for any $\eta\in T^{1,0}_zX$ and any $[\zeta]\in P(E^*_z)$,
\begin{equation}\label{sup}
    \sup_{q_*(\tilde{\eta})=\eta} \gamma_g(\tilde{\eta},\bar{\tilde{\eta}})=r\theta(g)(\tilde{\eta},\bar{\tilde{\eta}})-q^*\Theta(\det G)(\tilde{\eta},\bar{\tilde{\eta}})
\end{equation}
(We will prove (\ref{sup}) in Subsection \ref{sub 2.5}).

\begin{theorem}\label{thm 3}
Assume $r\geq 2$ and the line bundle $O_{P(E^*)}(1)$ has a positively curved metric $h$ and a metric $g$ induced from a Hermitian metric $G$ on $E^*$. If there exist a Hermitian metric $\Omega$ on $X$ and a constant $M\in [1,r)$ such that
\begin{align}\label{thm 3 assumption}
    M q^*\Omega &\geq -(r+1)\theta(g)+q^*\Theta(\det G)\text{ and}\\
    q^*\Omega &\leq-\theta(h), 
\end{align}
then $E^*\otimes \det E$ is Griffiths positive. 
\end{theorem}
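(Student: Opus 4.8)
\textbf{Proof strategy for Theorem \ref{thm 3}.}
Theorem \ref{thm 3} is the Hermitian counterpart of Theorem \ref{thm 1}, and the plan is to re-run that proof under the constraint that one never passes to a symmetric power: the final step in Theorem \ref{thm 1} takes a $k$-th root, and a $k$-th root of a Hermitian metric is only Finsler, so in order to end up with a genuine Hermitian (Griffiths positive) metric the whole argument must be carried out at $k=1$. Concretely, instead of making $S^{k}E\otimes(\det E^{*})^{k}$ Griffiths negative and taking a $k$-th root, I would produce directly a Griffiths negative Hermitian metric on $E\otimes\det E^{*}$, and then use the classical fact that the dual of a Griffiths negative Hermitian metric is a Griffiths positive Hermitian metric on the dual bundle; since $(E\otimes\det E^{*})^{*}=E^{*}\otimes\det E$, this yields the assertion.

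The metric on $E\otimes\det E^{*}$ would be built exactly as in Section \ref{sec pf of thm1} but with $k=1$. Using $E=q_{*}O_{P(E^{*})}(1)$, one forms an $L^{2}$-type Hermitian metric on $E$ out of $g$ (here the metric induced by the Hermitian metric $G$) and the fiberwise data of $\omega_{z}:=\Theta(g)|_{P(E^{*}_{z})}$, and a Hermitian metric on $\det E$ out of $h$; their combination on $E\otimes\det E^{*}$ is the object whose curvature must be controlled. It is convenient to phrase this through the identification $E\otimes\det E^{*}\cong q_{*}\bigl(O_{P(E^{*})}(r+1)\otimes K_{P(E^{*})/X}\bigr)$, which follows from $K_{P(E^{*})/X}=q^{*}\det E^{*}\otimes O_{P(E^{*})}(-r)$ and the projection formula, and which is exactly where the coefficient $r+1$ in (\ref{thm 3 assumption}) comes from.

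The curvature computation is the one done for large $k$ in Section \ref{sec pf of thm1}, with the one difference that the curvature of the relative canonical bundle $K_{P(E^{*})/X}$ — which there came divided by $k$ and disappeared in the limit — is now present at full strength. Its horizontal part is the form $\gamma_{g}$ of the paragraph preceding the theorem, and here is where (\ref{sup}), namely $\sup_{q_{*}\tilde\eta=\eta}\gamma_{g}(\tilde\eta,\bar{\tilde\eta})=r\,\theta(g)(\tilde\eta,\bar{\tilde\eta})-q^{*}\Theta(\det G)(\tilde\eta,\bar{\tilde\eta})$, is used; together with the $-\theta(g)$-type term produced by the weight $g$ via (\ref{inf=kob}), the contribution of $g$ and $K_{P(E^{*})/X}$ to the curvature of $E\otimes\det E^{*}$ ends up governed by $-(r+1)\theta(g)+q^{*}\Theta(\det G)$. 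Hypothesis (\ref{thm 3 assumption}) is precisely the inequality that makes the estimate close: $M q^{*}\Omega$ dominates that contribution, the $h$-side enters with coefficient $r$ through $q^{*}\Omega\le-\theta(h)$, and $M<r$ bridges the remaining gap; after a small perturbation making the inequality strict, $E\otimes\det E^{*}$ is Griffiths negative, and dualizing completes the proof.

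The step I expect to be the main obstacle is exactly the one that is free in Theorem \ref{thm 1}: with no large $k$ there is no room to absorb the curvature of $K_{P(E^{*})/X}$, so it must be tracked exactly, and the inequality it enters has to point in the direction producing an \emph{upper} bound on the curvature of $E\otimes\det E^{*}$ (equivalently a lower bound after dualizing). This is what forces the shape of (\ref{thm 3 assumption}) — the coefficient $r+1$, the correction term $q^{*}\Theta(\det G)$ dictated by (\ref{sup}), and the retention of the ceiling $M<r$ — and it is also why $g$ cannot be an arbitrary fiberwise-positive metric but must be induced from a genuine Hermitian metric $G$, so that $\det G$ and the fiberwise Ricci data entering (\ref{sup}) are available. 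The remaining care is in matching the fiberwise integration of the $L^{2}$ curvature formula against the pointwise bounds on $X$ and in checking that the perturbation does not destroy strict positivity; the rest is parallel to Subsections \ref{system} and \ref{sub 2.5} and Section \ref{sec pf of thm1}.
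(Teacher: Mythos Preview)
Your strategy is exactly the paper's: run the Theorem \ref{thm 1} argument at $k=1$, build the $L^{2}$ metric $H_{1}$ on $E$ from $g$ and the metric $H$ on $\det E$ from $h$, show $H_{1}\otimes H^{*}$ is Griffiths negative on $E\otimes\det E^{*}$, and dualize. The mechanism you describe---tracking the horizontal part of $\gamma_{g}$ via (\ref{sup}) so that the $g$-side contributes $-(r+1)\theta(g)+q^{*}\Theta(\det G)$, bounding that by $Mq^{*}\Omega$, and pairing it against the factor $r$ coming from Berndtsson's inequality on the $h$-side---matches Section~\ref{sec pf of thm3} precisely.

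Two small corrections. First, your aside $E\otimes\det E^{*}\cong q_{*}\bigl(O_{P(E^{*})}(r+1)\otimes K_{P(E^{*})/X}\bigr)$ is off: the relative canonical is $K_{P(E^{*})/X}\cong O_{P(E^{*})}(-r)\otimes q^{*}\det E$ (not $q^{*}\det E^{*}$), so the right-hand side is $E\otimes\det E$, not $E\otimes\det E^{*}$. This is only a heuristic remark in your write-up and does not affect the actual construction, but the $r+1$ really arises as $1$ (the weight $g$) plus $r$ (the $\gamma_{g}$ contribution via (\ref{sup})), not from an $O(r+1)$ twist. Second, no perturbation is needed at the end: since $M<r$ the combined estimate $(M+\tfrac{r-M}{4})\Omega-(r-\tfrac{r-M}{4})\Omega$ is already strictly negative, so $H_{1}\otimes H^{*}$ is Griffiths negative on the nose.
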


Theorem \ref{thm 3} could be seen as a Hermitian analogue of Theorem \ref{thm 1}. To state a Hermitian analogue of Theorem \ref{thm 2}, we use again the isomorphism between $O_{P(E\otimes \det E^*)}(1)\to P(E\otimes \det E^*)$ and $O_{P(E)}(1)\otimes p^*\det E\to P(E)$. 


\begin{theorem}\label{thm 4}
Assume $r\geq 2$ and $O_{P(E^*)}(1)$ has a positively curved metric $h$, and $O_{P(E)}(1)\otimes p^*\det E$ has a metric $g$ induced from a Hermitian metric $G$ on $E\otimes \det E^*$. If there exist a Hermitian metric $\Omega$ on $X$ and a constant $M\in [1,r)$ such that 
\begin{align}\label{thm 4 assumption}
    M p^*\Omega &\geq -(r+1)\theta(g)+p^*\Theta(\det G)  \text{ and }\\   q^*\Omega &\leq -\theta(h),
\end{align}
then $E$ is Griffiths positive.
\end{theorem}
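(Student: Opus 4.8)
The plan is to mirror the strategy of Theorem \ref{thm 2}, but to avoid the step that forces us into the Finsler category. In Theorem \ref{thm 2} one builds metrics on $\det E$ (from $h$) and on $S^k E^* \otimes (\det E)^k$ (from $g$), checks that the induced metric on $[S^k E^* \otimes (\det E)^k] \otimes (\det E^*)^k$ is Griffiths negative for $k$ large, and then takes $k$-th roots, perturbs, and dualizes to land on $E$. Here, since $g$ is \emph{induced from a genuine Hermitian metric $G$ on $E \otimes \det E^*$}, I would instead work directly at the level $k=1$: the metric $G$ on $E\otimes \det E^*$ together with the positively curved metric $h$ on $O_{P(E^*)}(1)$ (which produces a metric on $\det E$, hence on $\det E^*$) gives an induced Hermitian metric on $(E\otimes \det E^*) \otimes \det E^* \cdot (\det E) = E$ after the appropriate bookkeeping; more precisely one forms the induced Hermitian metric on $E^* \otimes \det E$ and dualizes. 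The task is to show this Hermitian metric on $E$ is Griffiths negative on its dual side, i.e.\ that the natural Hermitian metric on $E^* \otimes \det E$ is Griffiths negative, whence $E$ is Griffiths positive.

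First I would record the curvature identities needed: by (\ref{sup}), for $g$ induced from a Hermitian metric $G$ on $E\otimes \det E^*$, the relative-canonical curvature $\gamma_g$ restricted to fibers is $-\Ric$ of the Fubini--Study-type metric and satisfies $\sup_{p_*(\eta')=\eta}\gamma_g(\eta',\bar\eta') = r\,\theta(g)(\eta',\bar\eta') - p^*\Theta(\det G)(\eta',\bar\eta')$. Next, I would express the curvature of the induced Hermitian metric on $E^*\otimes \det E$ (equivalently on $E$) via a Griffiths-type curvature formula in terms of $\Theta(g)$ on $O_{P(E)}(1)\otimes p^*\det E$, the fiberwise positivity $\Theta(g)|_{P(E_z)}>0$, the curvature $\gamma_g$ of the fiberwise volume forms, and the curvature $\Theta(h)$ pulled back via $\det E$. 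The point of the weight $(r+1)$ and the shifted quantity $-(r+1)\theta(g)+p^*\Theta(\det G)$ in hypothesis (\ref{thm 4 assumption}) is precisely that, in the Hermitian (un-rooted) computation, the relative canonical bundle $K_{P(E)/X}$ contributes an extra term compared to the Finsler case; combining the $r\theta(g)$ coming from $\det E$ in the isomorphism $O_{P(E\otimes \det E^*)}(1)\simeq O_{P(E)}(1)\otimes p^*\det E$ with one more $\theta(g)$ from the tautological bundle itself yields the coefficient $r+1$. So the bound $M p^*\Omega \geq -(r+1)\theta(g)+p^*\Theta(\det G)$ with $M<r$ is the exact amount of room needed to absorb the fiber-direction terms once we feed in $q^*\Omega \leq -\theta(h)$.

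The technical heart is a pointwise linear-algebra estimate on a fiber $P(E_z)$: I want to show that the Hermitian quadratic form giving the curvature of $E^*\otimes\det E$ in a chosen frame is negative definite. This reduces, as in \cite[P. 346, Theorem 9.2]{demailly1997complex} for the Griffiths-positive case, to an averaging/trace computation over the projective fiber together with the fiberwise positivity of $\Theta(g)$; the difference from that classical argument is that ampleness does not give us a Griffiths-positive $G$ to start from, so the fiber-direction curvature is only controlled through the hypotheses $M p^*\Omega \geq -(r+1)\theta(g)+p^*\Theta(\det G)$ and $q^*\Omega \leq -\theta(h)$, and through the constraint $M\in[1,r)$. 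After the fiberwise estimate, one patches in the base directions using $\Omega$, chooses the mixing constants so that the $M/r < 1$ slack closes the inequality, and concludes Griffiths negativity of $E^*\otimes \det E$; by duality of Griffiths positivity this is Griffiths positivity of $E$. I expect the main obstacle to be the precise curvature formula for the induced metric on $E^*\otimes\det E$ in terms of $\theta(g)$, $\gamma_g$, and $\Theta(h)$ — in particular getting the constant $r+1$ and the sign of the $p^*\Theta(\det G)$ term to come out exactly as in (\ref{thm 4 assumption}), and verifying that no further fiber curvature term obstructs negativity; the rest is bookkeeping analogous to Theorems \ref{thm 1}--\ref{thm 3}.
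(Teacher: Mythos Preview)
Your overall strategy is correct and matches the paper's: run the argument of Theorem~\ref{thm 3} with $E^*$ replaced by $E\otimes\det E^*$, so that $g$ on $O_{P(E)}(1)\otimes p^*\det E \simeq O_{P(E\otimes\det E^*)}(1)$ produces, via the $L^2$ construction of Subsection~\ref{subsec direct}, a Hermitian metric $\boldsymbol{H}_1$ on $E^*\otimes\det E$, while $h$ produces the metric $H$ on $\det E$. The paper then tensors to obtain $\boldsymbol{H}_1\otimes H^*$ on $(E^*\otimes\det E)\otimes\det E^* \simeq E^*$, bounds its curvature by combining the two estimates, and concludes Griffiths negativity of $E^*$, hence positivity of $E$. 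Your explanation of the coefficient $r+1$ is essentially right: in normal coordinates the relevant matrix is $B_{\alpha\beta}=\phi_{\alpha\bar\beta}-(\log\det\phi_{i\bar j})_{\alpha\bar\beta}$, and by (\ref{min 2}), (\ref{min'}), (\ref{sup}) this equals $-\theta(g)-\sup\gamma_g=-(r+1)\theta(g)+\Theta(\det G)$ at the center.

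Your bookkeeping, however, slips at the decisive step. You write that the task is to show ``the natural Hermitian metric on $E^*\otimes\det E$ is Griffiths negative, whence $E$ is Griffiths positive.'' That implication is false: the dual of $E^*\otimes\det E$ is $E\otimes\det E^*$, not $E$. You must tensor $\boldsymbol{H}_1$ with $H^*$ \emph{before} testing negativity, so that the bundle whose curvature you bound is $E^*$ itself. This is not merely cosmetic: the curvature estimate for $\boldsymbol{H}_1$ alone (the analogue of Lemma~\ref{what ever}) gives only the upper bound $\boldsymbol{H}_1(\boldsymbol{\Theta}_1 u,u)(\eta,\bar\eta)\leq (M+\tfrac{r-M}{4})\Omega(\eta,\bar\eta)$, which has the wrong sign for negativity. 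It is the subtraction of $\Theta(H)(\eta,\bar\eta)\geq (r-\tfrac{r-M}{4})\Omega(\eta,\bar\eta)$, coming from the $\det E^*$ factor, that makes the combined curvature negative, and that subtraction is precisely the tensoring by $(\det E^*,H^*)$. Once you correct which bundle carries the Griffiths-negative metric, your sketch is the paper's proof.
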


In all the theorems above, the existence of the metric $h$ comes from ampleness of $E$, and the existence of metrics $g$ can be deduced easily as explained. So the real assumptions lie in $\Omega$, $M$, and the inequalities they have to satisfy. To weaken or remove these inequalities, one possible direction is to use geometric flows as in  \cite{naumann2017approach,wan2018positivity,yury,li2021hermitian}. Another possible direction is to use the interplay between the optimal $L^2$-estimates and the positivity of curvature (see \cite{GuanZhou,BoLem,LLextrapolation,Hacon,ZhouZhu}).  

One example where the assumptions of all the theorems above are satisfied is given by $E=L^9\bigoplus L^8 \bigoplus L^7$ with $L$ a positive line bundle. The triple $(9,8,7)$ or the rank $r=3$ is not that important; the point is to make sure the eigenvalues of the curvature with respect to some positive $(1,1)$-form do not spread out too far. A more sophisticated example, related to approximate Hermitian--Yang--Mills metrics (\cite{Jacob,MisraRay,li2021hermitian}), is semistable ample vector bundles over Riemann surfaces (see Section \ref{example} for details of the examples).

The proof of Theorem \ref{thm 1} is given in Section \ref{sec pf of thm1}, and almost as a corollary we prove Theorem \ref{thm 2} in Section \ref{sec pf o thm2}. The proof of Theorem \ref{thm 3} in Section \ref{sec pf of thm3} is a modification of Theorem \ref{thm 1}, but we still write out the details. In Section \ref{sec pf of thm 4}, we prove Theorem \ref{thm 4}
based on Section \ref{sec pf of thm3}.

I am grateful to L\'aszl\'o Lempert for his comments that help improve this paper. I would like to thank Academia Sinica, National Center for Theoretical Sciences, and National Taiwan University for their support.

\section{Preliminaries}

\subsection{Finsler metrics}\label{system}
We will use some facts about Finsler metrics on vector bundles which can be found in \cite{Negfinsler,ComplexFinsler,caowong,AikouMSRI,wu_2022}. First, we recall the definition of Finsler metrics. Let $E^*$ be a holomorphic vector bundle of rank $r$ over a compact complex manifold $X$. For a vector $\zeta\in E^*_z$, we symbolically write $(z,\zeta)\in E^*$. A smooth Finsler metric $G$ on the vector bundle $E^*\to X$ is a real-valued function on $E^*$ such that \begin{align*}
    &(1) \text{ $G$ is smooth away from the zero section of $E^*$}.\\
    &(2) \text{ For $(z,\zeta)\in E^*$}, G(z,\zeta)\geq 0, \text{ and equality holds if and only if $\zeta=0$}.\\
    &(3) \text{ $G(z,\lambda\zeta)=|\lambda|^2G(z,\zeta)$,  for $\lambda\in \mathbb{C}$}.
\end{align*}

Let $g$ be a Hermitian metric on $O_{P(E^*)}(1)$ with $\Theta(g)|_{P(E^*_z)}>0$ for all $z\in X$. Such a $g$ corresponds to a strongly pseudoconvex Finsler metric $G$ on $E^*$, and there is a natural Hermitian metric $\tilde{G}$ on the pull-back bundle $q^*E^*$ where $q:P(E^*)\to X$ is the projection (see \cite[Section 2.2]{wu_2022}). Now $(q^*E^*,\tilde{G})$ is a Hermitian holomorphic vector bundle, so we can talk about its Chern curvature $\Theta$, an $\End q^*E^*$-valued $(1,1)$-form on $P(E^*)$. With respect to the metric $\tilde{G}$, the bundle $q^*E^*$ has a fiberwise orthogonal decomposition $O_{P(E^*)}(-1)\oplus O_{P(E^*)}(-1)^\perp$, and so $\Theta$ can be written as a block matrix. Let $\Theta|_{O_{P(E^*)}(-1)}$ denote the block in the matrix $\Theta$ corresponding to $\End(O_{P(E^*)}(-1))$. Since $O_{P(E^*)}(-1)$ is a line bundle, $\Theta|_{O_{P(E^*)}(-1)}$ is a $(1,1)$-form on $P(E^*)$, and it is called the Kobayashi curvature of the Finsler metric $G$. We will use $\theta(g)$ to denote the Kobayashi curvature 
\begin{equation}\label{def of koba}
    \theta(g):=\Theta|_{O_{P(E^*)}(-1)}.
\end{equation}

In order to relate the Kobayashi curvature $\theta(g)$ to the curvature $\Theta(g)$ of $g$, we consider coordinates normal at one point. Given a point $(z_0,[\zeta_0])\in P(E^*)$, there exists a holomorphic frame $\{s_i\}$ for $E^*$ around $z_0\in X$ such that 
\begin{equation}\label{normal}
 G_{\zeta_i\bar{\zeta}_j}(z_0,\zeta_0)=\delta_{ij},\,\,
 G_{\zeta_i\bar{\zeta}_j z_\alpha}(z_0,\zeta_0)=G_{\zeta_i\bar{\zeta}_j \bar{z}_\beta}(z_0,\zeta_0)=G_{\bar{\zeta}_j z_\alpha}(z_0,\zeta_0)=G_{z_\alpha}(z_0,\zeta_0)=0,
\end{equation}
where we use $\{\zeta_i\}$ for the fiber coordinates on $E^*$ with respect to the frame $\{s_i\}$, and $\{z_\alpha\}$ for the local coordinates on $X$ (such a frame can be obtained by (5.11) in \cite{ComplexFinsler}). Moreover if $\Omega$ is a Hermitian metric on $X$, then by a linear transformation in the $z$-coordinates, we can make $\Omega(\partial/\partial z_\alpha,\partial/\partial \bar{z}_\beta )(z_0)=\delta_{\alpha\beta}$ without affecting (\ref{normal}). We will call this coordinate system normal at the point $(z_0,[\zeta_0])\in P(E^*)$.

Around the point $(z_0,[\zeta_0])\in P(E^*)$, we assume the local coordinates $(z_1,\cdots, z_n, w_1,\cdots, w_{r-1})$ are given by $w_i=\zeta_i/\zeta_r$ for $i=1\sim r-1$. So $$e:=\frac{\zeta_1s_1+\cdots+\zeta_rs_r}{\zeta_r}=w_1s_1+\cdots+w_{r-1}s_{r-1}+s_r$$
is a holomorphic frame for $O_{P(E^*)}(-1)$. Let $e^*$ be the dual frame of $O_{P(E^*)}(1)$ around $(z_0,[\zeta_0])\in P(E^*)$, and $g(e^*,e^*)=e^{-\phi}$. Then, the curvature $\Theta(g)$ can be written locally as 
\begin{equation*}
    \sum_{\alpha,\beta}\frac{\partial^2  \phi  }{\partial z_\alpha \partial \bar{z}_\beta}dz_\alpha\wedge d\bar{z}_\beta+\sum_{\alpha,j}\frac{\partial^2  \phi  }{\partial z_\alpha \partial \bar{w}_j}dz_\alpha\wedge d\bar{w}_j+\sum_{i,\beta}\frac{\partial^2  \phi  }{\partial w_i \partial\bar{z}_\beta}dw_i\wedge d\bar{z}_\beta+\sum_{i,j}\frac{\partial^2  \phi  }{\partial w_i \partial\bar{w}_j}dw_i\wedge d\bar{w}_j.
\end{equation*}
Note that the terms $\partial^2  \phi/\partial z_\alpha \partial \bar{w}_j:=\phi_{\alpha\bar{j}}$ vanish at $(z_0,[\zeta_0])$ by (\ref{normal}) and the fact $e^\phi=1/g(e^*,e^*)=G(w_1s_1+\cdots+w_{r-1}s_{r-1}+s_r) $. For a tangent vector $\eta\in T^{1,0}_{z_0}X$, we can write $\eta=\sum_\alpha \eta_\alpha \partial /\partial z_\alpha$. For the lifts $\tilde{\eta}$ of $\eta$ to $T^{1,0}_{(z_0,[\zeta_0])}P(E^*)$, we have 
\begin{equation}\label{min}
    \inf_{q_*(\tilde{\eta})=\eta} \Theta(g)(\tilde{\eta},\bar{\tilde{\eta}})=\sum_{\alpha,\beta} \phi_{\alpha\bar{\beta}}|_{(z_0,[\zeta_0])}\eta_\alpha \bar{\eta}_\beta
\end{equation}
because $\phi_{\alpha\bar{j}}=0$ at $(z_0,[\zeta_0])$ and the matrix $(\phi_{i\bar{j}})$ is positive. On the other hand, using the same coordinate system, the curvature $\Theta$ of $\tilde{G}$
can be written as
\begin{equation*}
    \Theta=\sum_{\alpha,\beta}R_{\alpha\bar{\beta}}\,dz_\alpha\wedge d\bar{z}_\beta+\sum_{\alpha,l}P_{\alpha\bar{l}}\,dz_\alpha\wedge d\bar{w}_l+\sum_{k,\beta}\mathcal{P}_{k\bar{\beta}}\,dw_k\wedge d\bar{z}_\beta+\sum_{k,l}Q_{k\bar{l}}\,dw_k\wedge d\bar{w}_l,
\end{equation*}
where $R_{\alpha\bar{\beta}},P_{\alpha\bar{l}},\mathcal{P}_{k\bar{\beta}}$, and $Q_{k\bar{l}}$ are endomorphisms of $q^*E^*$. By \cite[Formula (2.4)]{wu_2022}, for any lift $\tilde{\eta}$ of $\eta$ to $T^{1,0}_{(z_0,[\zeta_0])}P(E^*)$, we have
\begin{equation}\label{wha}
  \theta(g)(\tilde{\eta},\bar{\tilde{\eta}})=\Theta|_{O_{P(E^*)}(-1)}(\tilde{\eta},\bar{\tilde{\eta}})=\sum_{\alpha,\beta} \frac{\tilde{G}(R_{\alpha\bar{\beta}}\zeta_0,\zeta_0)}{\tilde{G}(\zeta_0,\zeta_0)}\eta_\alpha \bar{\eta}_\beta=-\sum_{\alpha,\beta} \phi_{\alpha\bar{\beta}}|_{(z_0,[\zeta_0])}\eta_\alpha \bar{\eta}_\beta,
\end{equation}
where the last equality is by \cite[Formula 5.16]{ComplexFinsler}.

From (\ref{min}) and (\ref{wha}), we see
$$\inf_{q_*(\tilde{\eta})=\eta} \Theta(g)(\tilde{\eta},\bar{\tilde{\eta}})=-\theta(g)(\tilde{\eta},\bar{\tilde{\eta}})$$ 
which is formula (\ref{inf=kob}) we claim in the Introduction, and when evaluated using normal coordinates they are 
$\sum_{\alpha,\beta} \phi_{\alpha\bar{\beta}}|_{(z_0,[\zeta_0])}\eta_\alpha \bar{\eta}_\beta$.

\subsection{Hermitian metrics}\label{sub 2.5}
This subsection is a special case of Subsection \ref{system}, and it will be used in the proofs of Theorems \ref{thm 3} and \ref{thm 4}. Let $G$ be a Hermitian metric on the bundle $E^*$. The pull-back bundle $q^*E^*\to P(E^*)$ with the pull-back metric $q^*G$ induces a metric $g^*$ on the subbundle $O_{P(E^*)}(-1)$. We denote the dual metric on $O_{P(E^*)}(1)$ by $g$.

Let $\Omega$ be a Hermitian metric on $X$ and $z_0$ a point in $X$ with local coordinates $\{z_\alpha \}$ such that $\Omega(\partial/\partial z_\alpha, \partial/ \partial \bar{z}_\beta)(z_0)=\delta_{\alpha\beta}$. There exists a holomorphic frame $\{s_i\}$ for $E^*$ around $z_0$ such that $G(s_i,s_j)=\delta_{ij}+O(|z|^2)$ where $z_0$ corresponds to the origin in the local coordinates. We use $\{\zeta_i\}$ for the fiber coordinates with respect to the frame $\{s_i\}$. For a point $(z_0,[\zeta_0])\in P(E^*)$, we assume the local coordinates $(z_1,\cdots, z_n, w_1,\cdots, w_{r-1})$ around $(z_0,[\zeta_0])$ are given by $w_i=\zeta_i/\zeta_r$ for $i=1\sim r-1$. So $$e:=\frac{\zeta_1s_1+\cdots+\zeta_rs_r}{\zeta_r}=w_1s_1+\cdots+w_{r-1}s_{r-1}+s_r$$
is a holomorphic frame for $O_{P(E^*)}(-1)$, and
\begin{align*}
  g^*(e,e)&=q^*G(w_1s_1+\cdots+w_{r-1}s_{r-1}+s_r,w_1s_1+\cdots+w_{r-1}s_{r-1}+s_r)\\
  &=1+O(|z|^2)+O(|w|^2)+O(|w||z|^2)+O(|w|^2|z|^2).  
\end{align*}
The $z_\alpha$-derivative of $g^*(e,e)$ is $g^*(e,e)_{z_\alpha}=O((1+|w|+|w|^2)|z|)$, hence the $w_i$-derivatives of $g^*(e,e)_{z_\alpha}$ of any order are zero when evaluated at $z_0$. Therefore, if we denote $g^*(e,e)$ by $e^{\phi}$, then at $z_0$ 
\begin{equation}\label{vanish}
    \phi_{\alpha\bar{j}}=\phi_{\alpha i \bar{j}}=\phi_{\alpha i \bar{j}\bar{k}}=0 \text{, and } \big(\log \det (\phi_{i\bar{j}}) \big)_{\alpha\bar{k}}=0.
\end{equation}
In this coordinate system, the curvature $\Theta(g)$ is 
\begin{equation*}
    \sum_{\alpha,\beta}\frac{\partial^2  \phi  }{\partial z_\alpha \partial \bar{z}_\beta}dz_\alpha\wedge d\bar{z}_\beta+\sum_{\alpha,j}\frac{\partial^2  \phi  }{\partial z_\alpha \partial \bar{w}_j}dz_\alpha\wedge d\bar{w}_j+\sum_{i,\beta}\frac{\partial^2  \phi  }{\partial w_i \partial\bar{z}_\beta}dw_i\wedge d\bar{z}_\beta+\sum_{i,j}\frac{\partial^2  \phi  }{\partial w_i \partial\bar{w}_j}dw_i\wedge d\bar{w}_j.
\end{equation*}
For a tangent vector $\eta\in T^{1,0}_{z_0}X$, we can write $\eta=\sum_\alpha \eta_\alpha \partial /\partial z_\alpha$. For the lifts $\tilde{\eta}$ of $\eta$ to $T^{1,0}_{(z_0,[\zeta_0])}P(E^*)$, we have 
\begin{equation}\label{min 2}
    \inf_{q_*(\tilde{\eta})=\eta} \Theta(g)(\tilde{\eta},\bar{\tilde{\eta}})=\sum_{\alpha,\beta} \phi_{\alpha\bar{\beta}}|_{(z_0,[\zeta_0])}\eta_\alpha \bar{\eta}_\beta
\end{equation}
because $\phi_{\alpha\bar{j}}=0$ at $z_0$ and the matrix $(\phi_{i\bar{j}})$ is positive. Since $G$ is a Hermitian metric, the corresponding Kobayashi curvature is  
\begin{equation}\label{Herm}
 \theta(g)=q^*\Theta(G)|_{O_{P(E^*)}(-1)},   
\end{equation}
which is equal to the negative of (\ref{min 2}) by Subsection \ref{system}.

Using the same coordinate system, the restriction $\Theta(g)|_{P(E^*_z)}$ is $\sum \phi_{i\bar{j}} dw_i \wedge d\bar{w}_j$, so the metric on $K_{P(E^*)/X}$ induced from $\{(\Theta(g)|_{P(E^*_z)})^{r-1}\}_{z\in X}$ has its curvature $\gamma_g$ equal to 
\begin{equation}\label{gamma}
\begin{aligned}
    &\sum_{\alpha,\beta} (\log \det (\phi_{i\bar{j}}))_{\alpha\bar{\beta}}    dz_\alpha\wedge d\bar{z}_\beta+\sum_{\alpha,j}    (\log \det (\phi_{i\bar{j}}))_{\alpha\bar{j}}    dz_\alpha\wedge d\bar{w}_j\\
    +&\sum_{i,\beta}    (\log \det (\phi_{i\bar{j}}))_{i\bar{\beta}}    dw_i\wedge d\bar{z}_\beta+\sum_{i,j}    (\log \det (\phi_{i\bar{j}}))_{i\bar{j}} dw_i\wedge d\bar{w}_j.
\end{aligned}
\end{equation}
The matrix 
$\big((\log \det (\phi_{i\bar{j}}))_{i\bar{j}}\big)$ is negative because it represents the negative of the Ricci curvature of the Fubini--Study metric on $P(E^*_z)$. Moreover, the terms $(\log \det (\phi_{i\bar{j}}) )_{\alpha\bar{j}}=0$ at $z_0$ by (\ref{vanish}). As a result, for a tangent vector $\eta\in T^{1,0}_{z_0}X$ with $\eta=\sum \eta_\alpha \partial/\partial  z_\alpha$ in this coordinate system, we have 
\begin{equation}\label{min'}
\sup_{q_*(\tilde{\eta})=\eta} \gamma_g (\tilde{\eta},\bar{\tilde{\eta}})= \sum_{\alpha,\beta} (\log \det (\phi_{i\bar{j}}))_{\alpha\bar{\beta}}|_{(z_0,[\zeta_0])} \eta_\alpha \bar{\eta}_\beta, \end{equation}
where $\tilde{\eta}$ are the lifts of $\eta$ to $T^{1,0}_{(z_0,[\zeta_0])}P(E^*)$.    

Finally, the metric on $K_{P(E^*)/X}$ induced from $\{(\Theta(g)|_{P(E^*_z)})^{r-1}\}_{z\in X}$ can be identified with the metric $(g^*)^r\otimes q^* (\det G^*)$ under the isomorphism $K_{P(E^*)/X}\simeq O_{P(E^*)}(-r)\otimes q^* \det E$ (\cite[Page 85, Prop. 3.6.20]{MR909698}). This fact can be verified at one point using the normal coordinates above. Therefore, \begin{equation}\label{gamma 1}
 \gamma_g=-r\Theta(g)-q^*\Theta(\det G). 
\end{equation}
So, for any $\eta\in T^{1,0}_zX$ and any $[\zeta]\in P(E^*_z)$,
\begin{equation*}
    \sup_{q_*(\tilde{\eta})=\eta} \gamma_g(\tilde{\eta},\bar{\tilde{\eta}})=-r\inf_{q_*(\tilde{\eta})=\eta} \Theta(g)(\tilde{\eta},\bar{\tilde{\eta}})-\Theta(\det G)(\eta,\bar{\eta})=r\theta(g)(\tilde{\eta},\bar{\tilde{\eta}})-\Theta(\det G)(\eta,\bar{\eta}).
\end{equation*}
This is formula (\ref{sup}) that we promise to prove in the Introduction.

\subsection{Convexity}

Let $E$ be a holomorphic vector bundle of rank $r$ over a compact complex manifold $X$. Given a Hermitian metric $H_k$ on the symmetric power $S^kE$, we can define a Finsler metric on $E$ by assigning to $u\in E$ length $H_k(u^k,u^k)^{1/2k}$. We will denote this Finsler metric by $H_k^{1/2k}$, namely $H_k^{1/2k}(u)=H_k(u^k,u^k)^{1/2k}$.
\begin{lemma}\label{convex}
Let $F_1$ be a vector bundle and $F_2$ a line bundle over $X$. Assume $F_2$ carries a Hermitian metric $H$. We also assume, for some $k$, $S^kF_1$ carries a Hermitian metric $H_k$ such that the induced Finsler metric $H_k^{1/2k}$ on $F_1$ is convex:  $$H_k^{1/2k}(u+v)\leq H_k^{1/2k}(u)+H_k^{1/2k}(v) \text{ for } u,v\in F_1. $$ Then the Finsler metric $(H_k\otimes H^k)^{1/2k}$ on $F_1\otimes F_2$ is convex.
\end{lemma}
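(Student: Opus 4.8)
The plan is to work locally. Fix a point $x_0 \in X$ and a local holomorphic trivialization of the line bundle $F_2$ near $x_0$, with a holomorphic frame $\sigma$ for $F_2$, so that $H(\sigma,\sigma) = e^{-\psi}$ for some smooth function $\psi$. An element $w \in (F_1 \otimes F_2)_{x}$ over a nearby point $x$ can be written uniquely as $w = u \otimes \sigma(x)$ with $u \in (F_1)_x$, and then $w^{\otimes k} = u^{\otimes k} \otimes \sigma(x)^{\otimes k}$, so by definition $(H_k \otimes H^k)^{1/2k}(w) = \big(H_k(u^{\otimes k}, u^{\otimes k}) \cdot H(\sigma,\sigma)^k\big)^{1/2k} = e^{-\psi(x)/2}\, H_k^{1/2k}(u)$. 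So in this trivialization the new Finsler metric is just the old convex Finsler metric $H_k^{1/2k}$ on $F_1$ scaled by the positive factor $e^{-\psi(x)/2}$, which does not depend on the fiber direction.

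The key point is then purely fiberwise: convexity of a Finsler metric is a statement about each fiber separately, and in the trivialization above, over a fixed point $x$ the new metric on the fiber $(F_1 \otimes F_2)_x \cong (F_1)_x$ is $e^{-\psi(x)/2}$ times the metric $H_k^{1/2k}$ on $(F_1)_x$, which is convex by hypothesis. Multiplying a convex (positively homogeneous) function by a positive constant preserves convexity: if $N(u) := H_k^{1/2k}(u)$ satisfies $N(u+v) \le N(u) + N(v)$, then $cN$ satisfies the same inequality for any $c > 0$. Hence for $w_1 = u_1 \otimes \sigma(x)$, $w_2 = u_2 \otimes \sigma(x)$ in the same fiber,
\begin{equation*}
(H_k \otimes H^k)^{1/2k}(w_1 + w_2) = e^{-\psi(x)/2} N(u_1 + u_2) \le e^{-\psi(x)/2}\big(N(u_1) + N(u_2)\big) = (H_k \otimes H^k)^{1/2k}(w_1) + (H_k \otimes H^k)^{1/2k}(w_2),
\end{equation*}
which is exactly convexity of $(H_k \otimes H^k)^{1/2k}$.

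I would also note in passing that this construction is coordinate-independent: changing the holomorphic frame $\sigma$ to $f\sigma$ for a nonvanishing holomorphic $f$ multiplies $u$ by $f^{-1}$ and $e^{-\psi/2}$ by $|f|$, so the product $e^{-\psi(x)/2} H_k^{1/2k}(u)$ is unchanged — this just confirms that $(H_k\otimes H^k)^{1/2k}$ is well-defined, which is implicit in the statement anyway. There is essentially no obstacle here: the only thing to be careful about is the bookkeeping that $w^{\otimes k}$ in $S^k(F_1 \otimes F_2)$ decomposes as $u^{\otimes k} \otimes \sigma^{\otimes k}$ under the natural identification $S^k(F_1 \otimes F_2) \cong S^k F_1 \otimes F_2^{\otimes k}$ (valid since $F_2$ is a line bundle), and the harmless observation that scaling by a fiber-constant positive factor preserves the triangle inequality. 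The positive homogeneity (property (3) of a Finsler metric) is automatic from the corresponding property of $H_k^{1/2k}$ and the fact that the scaling factor is independent of the fiber variable.
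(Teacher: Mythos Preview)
Your proof is correct and takes essentially the same approach as the paper: fix a point, choose a frame for the line bundle fiber, factor the tensor-product metric as a positive scalar times $H_k^{1/2k}$, and invoke the assumed triangle inequality. The paper's version is slightly more terse (it works directly at a single fiber without discussing holomorphic trivializations or coordinate independence), but the argument is the same.
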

Since $F_2$ is a line bundle, there is a canonical isomorphism between the bundles $S^k(F_1\otimes F_2)$ and $S^kF_1\otimes F_2^k$ which we use implicitly in the statement of Lemma \ref{convex}. Roughly speaking, Lemma \ref{convex} indicates that convexity is not affected by tensoring with a line bundle.

\begin{proof}
Fix $p\in X$. The fiber $F_2|_p$ is a one dimensional vector space, and we let $e$ be a basis.
For $x$ and $y\in F_1\otimes F_2 |_p$, we can write $x=\tilde{x}\otimes e$ and $y=\tilde{y}\otimes e$ where $\tilde{x},\tilde{y}\in F_1|_p$. By definition,
\begin{equation*}
   \begin{aligned}
    &(H_k\otimes H^k)^{\frac{1}{2k}}(x+y)=H_k\otimes H^k\big((x+y)^k,(x+y)^k   \big)^{\frac{1}{2k}}\\
    =&H_k\otimes H^k\big((\tilde{x}+\tilde{y})^k\otimes e^k ,(\tilde{x}+\tilde{y})^k\otimes e^k   \big)^{\frac{1}{2k}}\\
    =&H_k( (\tilde{x}+\tilde{y})^k,(\tilde{x}+\tilde{y})^k     )^{\frac{1}{2k}} H^k(e^k,e^k)^{\frac{1}{2k}}\\
    \leq & \big[H_k( \tilde{x}^k,\tilde{x}^k     )^{\frac{1}{2k}}+H_k( \tilde{y}^k,\tilde{y}^k     )^{\frac{1}{2k}}\big] H^k(e^k,e^k)^{\frac{1}{2k}}\\
    =&(H_k\otimes H^k)^{\frac{1}{2k}}(x)+(H_k\otimes H^k)^{\frac{1}{2k}}(y).
   \end{aligned} 
\end{equation*}
Therefore the Finsler metric $(H_k\otimes H^k)^{1/2k}$ is convex.\end{proof}

\subsection{Direct image bundles}\label{subsec direct}

We recall how to construct Hermitian metrics on direct image bundles and compute their curvature. Let $g$ be a Hermitian metric on $O_{P(E^*)}(1)$ with curvature $\Theta(g)$. Denote the restriction of the curvature to a fiber, $\Theta(g)|_{P(E^*_z)}$ by
$\omega_z$ for $z\in X$, and assume $\omega_z>0$ for all $z\in X$. With the canonical isomorphism $$\Phi_{k,z}:S^k E_z\to H^0(P(E^*_z),O_{P(E^*_z)}(k)), \text{ for $k\geq 0$ } $$ (see \cite[Page 278, Theorem 15.5]{demailly1997complex}), we define a Hermitian metric $H_k$ on $S^kE$ by 
\begin{align}\label{L2 metric}
H_k(u,v):=\int_{P(E^*_z)} g^k(\Phi_{k,z}(u),\Phi_{k,z}(v) ) \omega_z^{r-1}, \textup{ for $u \textup{ and } v\in S^kE_z$.}
\end{align}

Let us denote by $\Theta_k$ the curvature of $H_k$. Fixing $z\in X$ and $u\in S^k E_{z}$, in order to estimate the $(1,1)$-form $H_k(\Theta_k u,u)$, we first extend the vector $u$ to a local holomorphic section $\tilde{u}$ whose covariant derivative at $z$ with respect to $H_k$ equals zero. A straightforward computation shows $$\partial \bar{\partial} H_k( \tilde{u},\tilde{u}  )\big|_{z}=-H_k(\Theta_k u, u). $$ 
But $H_k( \tilde{u},\tilde{u}  )(\mathcal{z} ) $ for $\mathcal{z}$ near $z$ can also be written as the push-forward $$q_* \big( g^k( \Phi_{k,\mathcal{z}}(\tilde{u}),  \Phi_{k,\mathcal{z}}(\tilde{u})       )   \Theta(g)^{r-1} \big),   $$
where $q:P(E^*)\to X$ is the projection, so 
\begin{equation}\label{direct im}
   -H_k(\Theta_k u, u)=\partial \bar{\partial} H_k( \tilde{u},\tilde{u}  )|_z=   q_* \partial \bar{\partial}\big( g^k( \Phi_{k,\mathcal{z}}(\tilde{u}),  \Phi_{k,\mathcal{z}}(\tilde{u})      )   \Theta(g)^{r-1} \big)\big|_z. 
\end{equation}

Similarly, we can use a metric on $O_{P(E)}(1)\otimes p^*\det E$ to construct Hermitian metrics on $S^kE^* \otimes (\det E)^k$. The formula is similar to (\ref{L2 metric}), and we use bold symbols to highlight the change. Let $g$ be a metric on $O_{P(E)}(1)\otimes p^*\det E$ with curvature $\Theta(g)$. Denote the restriction of the curvature to a fiber $\Theta(g)|_{P(E_z)}$ by $\boldsymbol{\omega}_z$ for $z\in X$. Assume $\boldsymbol{\omega}_z>0$ for all $z\in X$. With the canonical isomorphism $$\boldsymbol{\Phi}_{k,z}:S^k E^*_z\otimes (\det E_z)^k\to H^0(P(E_z),O_{P(E_z)}(k)\otimes (p^*\det E_z)^k), \text{ for $k\geq 0$, } $$ we define a Hermitian metric $\boldsymbol{H}_k$ on $S^kE^*\otimes (\det E)^k$ by 
\begin{align}\label{L2 metric 3}
\boldsymbol{H}_k(u,v):=\int_{P(E_z)} g^k(\boldsymbol{\Phi}_{k,z}(u),\boldsymbol{\Phi}_{k,z}(v) ) \boldsymbol{\omega}_z^{r-1}, \textup{ for $u \textup{ and } v\in S^kE^*_z\otimes (\det E_z)^k$.}
\end{align} 
We also have a curvature formula similar to (\ref{direct im}).

\subsection{Berndtsson's positivity theorem}

Let $h$ be a metric on $O_{P(E^*)}(1)$ with curvature $\Theta(h)>0$. Denote $\Theta(h)|_{P(E^*_z)}$ by $\omega_z$ for $z\in X$. We are going to define a Hermitian metric on $\det E$ using the metric $h$. The relative canonical bundle $K_{P(E^*)/X}$ has a metric induced from $\{\omega_z^{r-1}\}_{z\in X}$. With $h^r$ on $O_{P(E^*)}(r)$ and the isomorphism $K_{P(E^*)/X}\otimes O_{P(E^*)}(r)\simeq q^* \det E$, there is an induced metric $\rho$ on $q^* \det E$. Using the canonical isomorphism $$\Psi_{z}:  \det E_z  \to H^0(P(E^*_z), q^*\det E_z), $$
 we define a Hermitian metric $H$ on $\det E$ by
\begin{align}\label{L2 metric 2}
    H(u,v):=\int_{P(E^*_z)} \rho (\Psi_{z}(u),\Psi_{z}(v) ) \omega_z^{r-1}, \textup{ for $u \textup{ and } v\in  \det E_z$.}
\end{align}
By Berndtsson's theorem \cite{Berndtsson09}, this metric $H$ is Griffiths positive, but it is the inequality that leads to this fact we will use. We follow the presentation in \cite[Section 4.1]{positivityandvanishingthmliu} (see also \cite[Section 2]{berndtssonconvexity}). Denote the curvature of $H$ by $\Theta$. Fix $z\in X$, $v\in \det E_z$, and $\eta\in T^{1,0}_z X$. For a local holomorphic frame of $E^*$ around $z$, we denote by $\{\zeta_i\}$ the fiber coordinates with respect to this frame, and by $\{z_\alpha\}$ the local coordinates on $X$. Around $P(E^*_z)$ in $P(E^*)$, we have homogeneous coordinates $[\zeta_1,...,\zeta_r]$ which induce local coordinates $(w_1,...,w_{r-1})$. For a local frame $e^*$ of $O_{P(E^*)}(1)$, we denote $h(e^*,e^*)$ by $e^{-\phi}$ and write the tangent vector $\eta=\sum \eta_\alpha \partial/\partial z_\alpha$. The inequality that leads to Berndtsson's theorem is 
\begin{equation}\label{positive detE}
    -H(\Theta v,v)(\eta,\bar{\eta})\leq \int_{P(E^*_z)} \rho (\Psi_{z}(v),\Psi_{z}(v) )   r \sum_{\alpha,\beta} \big(  \sum_{i,j}\phi_{\alpha \bar{j}}\phi^{i\bar{j}}\phi_{i\bar{\beta}}-\phi_{\alpha\bar{\beta}}   \big)\eta_\alpha \bar{\eta}_\beta  \omega_z^{r-1},  
\end{equation}
where  $\phi_{i\bar{j}}:=\partial^2  \phi/\partial w_i \partial \bar{w}_j$, $\phi_{\alpha\bar{j}}:=\partial^2  \phi/\partial z_\alpha \partial \bar{w}_j$, $\phi_{\alpha\bar{\beta}}:=\partial^2  \phi/\partial z_\alpha \partial \bar{z}_\beta$, and $(\phi^{i\bar{j}})$ is the inverse matrix of $(\phi_{i\bar{j}})$. Since $\det E$ is a line bundle, the curvature $\Theta$ is a $(1,1)$-form, and so $H(\Theta v,v)(\eta,\bar{\eta})=H( v,v)\Theta(\eta,\bar{\eta})$. If we further assume $H(v,v)=1$, then the left hand side of (\ref{positive detE}) becomes $-\Theta(\eta,\bar{\eta})$.


\section{Proof of Theorem \ref{thm 1}}\label{sec pf of thm1}

We use the metric $h$ to construct a Hermitian metric $H$ on $\det E$ as in (\ref{L2 metric 2}), and the metric $g$ to construct Hermitian metrics $H_k$ on $S^kE$ as in (\ref{L2 metric}). The number $k$ is yet to be determined. 

We start with the metric $g$. Given a point $(z_0,[\zeta_0])\in P(E^*)$, we have the normal coordinate system from Subsection \ref{system}. In this coordinate system, let us introduce the following $n$-by-$n$ matrix-valued function
\begin{align*}
    B_k=\big((B_k)_{\alpha\beta}\big):=\big( k\phi_{\alpha\bar{\beta}}- (\log \det( \phi_{i\bar{j}}))_{\alpha\bar{\beta}}      \big),
\end{align*}
where $g(e^*,e^*)=e^{-\phi}$. By continuity, there is a neighborhood $U$ of $(z_0,[\zeta_0])$ in $P(E^*)$ such that in $U$
\begin{align}
    (\phi_{\alpha\bar{\beta}})|_{(z_0,[\zeta_0])}+\frac{r-M}{4}\Id_{n\times n}\geq (\phi_{\alpha\bar{\beta}}).\label{456}
\end{align}
For this $U$, there is a positive integer $k_0$ such that for $k\geq k_0$ and in $U$
\begin{equation}\label{789}
     (\phi_{\alpha\bar{\beta}})+\frac{r-M}{4}\Id_{n\times n}\geq \frac{B_k}{k}.
\end{equation}
Let us summarize what we have done so far in 
\begin{lemma}\label{lem sum}
Given a point $(z_0,[\zeta_0])\in P(E^*)$, there exist a coordinate neighborhood $U$ of $(z_0,[\zeta_0])$ in $P(E^*)$ and a positive integer $k_0$ such that in $U$ and for $k\geq k_0$
\begin{align}
(\phi_{\alpha\bar{\beta}})|_{(z_0,[\zeta_0])}+\frac{r-M}{2}\Id_{n\times n}\geq \frac{B_k}{k}.\label{ineq 2 in lem}
\end{align}
\end{lemma}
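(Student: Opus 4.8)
The plan is to obtain (\ref{ineq 2 in lem}) by simply chaining the two matrix inequalities (\ref{456}) and (\ref{789}) that have already been established; the statement is packaged as a lemma only to have a clean reference for the later argument, so there is essentially nothing new here. First I would recall why those two inequalities are available. The entries $\phi_{\alpha\bar{\beta}}$ are smooth, hence continuous, in $U$, so after shrinking $U$ if necessary the Hermitian matrix $(\phi_{\alpha\bar{\beta}})$ stays within $\frac{r-M}{4}\Id_{n\times n}$ of its value at the center $(z_0,[\zeta_0])$; since $M\in[1,r)$ the number $\frac{r-M}{4}$ is genuinely positive, and this is (\ref{456}). For (\ref{789}), write $B_k/k=(\phi_{\alpha\bar{\beta}})-\frac{1}{k}\big((\log\det(\phi_{i\bar{j}}))_{\alpha\bar{\beta}}\big)$; on a relatively compact neighborhood the matrix $\big((\log\det(\phi_{i\bar{j}}))_{\alpha\bar{\beta}}\big)$ is uniformly bounded, so its rescaling $\frac{1}{k}\big((\log\det(\phi_{i\bar{j}}))_{\alpha\bar{\beta}}\big)$ tends uniformly to $0$ and is therefore dominated in absolute value by $\frac{r-M}{4}\Id_{n\times n}$ once $k\geq k_0$ for a suitable $k_0$; feeding this into the displayed expression for $B_k/k$ gives (\ref{789}).

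Then I would combine the two: from (\ref{456}) one has $(\phi_{\alpha\bar{\beta}})\leq (\phi_{\alpha\bar{\beta}})|_{(z_0,[\zeta_0])}+\frac{r-M}{4}\Id_{n\times n}$ throughout $U$, and substituting this bound into the right-hand side of (\ref{789}) yields
\[
\frac{B_k}{k}\leq (\phi_{\alpha\bar{\beta}})+\frac{r-M}{4}\Id_{n\times n}\leq (\phi_{\alpha\bar{\beta}})|_{(z_0,[\zeta_0])}+\frac{r-M}{2}\Id_{n\times n}
\]
for all $k\geq k_0$ at every point of $U$, which is exactly (\ref{ineq 2 in lem}).

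The one point that needs a little care --- and it is the ``main obstacle'' only in a very mild sense --- is the order of the two shrinking steps: one must first fix $U$ small enough (and relatively compact, so that the $\log\det$ term is bounded there) to secure both (\ref{456}) and the uniform bound feeding (\ref{789}), and only afterwards pick $k_0$ depending on that already-fixed $U$. With the quantifiers arranged in this order the argument closes, and nothing beyond continuity of $\phi_{\alpha\bar{\beta}}$ and the $1/k$ decay of the relative-canonical term $(\log\det(\phi_{i\bar{j}}))_{\alpha\bar{\beta}}$ is required.
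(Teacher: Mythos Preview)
Your proposal is correct and follows exactly the paper's own approach: the lemma is introduced with the phrase ``Let us summarize what we have done so far,'' and the paper's proof is nothing more than the concatenation of (\ref{456}) and (\ref{789}) that you spell out. Your remark about fixing $U$ first and then choosing $k_0$ matches the paper's ordering (``For this $U$, there is a positive integer $k_0$\ldots'').
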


By Lemma \ref{lem sum}, since $P(E^*_{z_0})$ is compact, we can find finitely many points $\{(z_0,[\zeta_l])\}_l$ on $P(E^*_{z_0})$ each of which corresponds to a coordinate neighborhood $U_l$ in $P(E^*)$ and a positive integer $k_l$ such that the corresponding (\ref{ineq 2 in lem}) holds, and $P(E^*_{z_0})\subset\bigcup_l U_l$. Denote $\max_l k_l$ by $k_{\max}$. The point $z_0$ has a neighborhood $W$ in $X$ such that for $z\in W$, the fiber $P(E^*_z)$ can be partitioned as $\bigcup_m V_m$ with each $V_m$ in $U_l$ for some $l$. By shrinking $W$, we can assume that for each $U_l$ the corresponding $\Omega(\partial/\partial z_\alpha,\partial/\partial \bar{z}_\beta ):=\Omega_{\alpha\bar{\beta}}$ satisfies 
\begin{equation}\label{epsilon}
-\varepsilon \delta_{\alpha\beta}    <\Omega_{\alpha\bar{\beta}}(z)-\delta_{\alpha\beta}<\varepsilon \delta_{\alpha\beta} \text{, for $z\in W$}
\end{equation}
where $\varepsilon:=(r-M)/5(r+M)$.

Recall the Hermitian metrics $H_k$ on $S^kE$ in (\ref{L2 metric}) constructed using the metric $g$. Denote by $\Theta_k$ the curvature of $H_k$. We claim
\begin{lemma}\label{lem ineq 1}
For $k\geq k_{\max}$, $z\in W$, $0\neq \eta \in T^{1,0}_z X$, and $ u\in S^kE_z$ with $H_k(u,u)=1$, we have 
\begin{equation}
\begin{aligned}
H_k(\Theta_k u, u)(\eta,\bar{\eta})\leq  \big(M+\frac{r-M}{2} \big)k\frac{\Omega (\eta,\bar{\eta})}{(1-\varepsilon)}.
\end{aligned} 
\end{equation}
\end{lemma}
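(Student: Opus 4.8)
The plan is to combine the direct-image curvature formula \eqref{direct im} with the Berndtsson-type inequality \eqref{positive detE} and then bound the resulting fiber integral pointwise using Lemma \ref{lem sum}. First I would fix $k\ge k_{\max}$, $z\in W$, $0\ne\eta\in T^{1,0}_zX$, and $u\in S^kE_z$ with $H_k(u,u)=1$, and extend $u$ to a local holomorphic section $\tilde u$ of $S^kE$ whose $H_k$-covariant derivative vanishes at $z$. By \eqref{direct im},
\begin{equation*}
 -H_k(\Theta_k u,u)(\eta,\bar\eta)=\Big[q_*\,\partial\bar\partial\big(g^k(\Phi_{k,\mathcal z}(\tilde u),\Phi_{k,\mathcal z}(\tilde u))\,\Theta(g)^{r-1}\big)\Big]_z(\eta,\bar\eta),
\end{equation*}
and the integrand on the right, written in the fiber coordinates $(w_i)$ with $g(e^*,e^*)=e^{-\phi}$, produces exactly the same kind of expression analyzed for the $\det E$ case in Subsection 2.6: the $\Theta(g)^{r-1}=\omega_z^{r-1}$ factor contributes $-(\log\det(\phi_{i\bar j}))_{\alpha\bar\beta}$ terms, and the $g^k$ factor contributes $k\phi_{\alpha\bar\beta}$ terms together with the Berndtsson cross-terms $k\sum_{i,j}\phi_{\alpha\bar j}\phi^{i\bar j}\phi_{i\bar\beta}$. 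This is precisely why the matrix $B_k=(k\phi_{\alpha\bar\beta}-(\log\det(\phi_{i\bar j}))_{\alpha\bar\beta})$ was introduced: the net effect is the lower bound
\begin{equation*}
 -H_k(\Theta_k u,u)(\eta,\bar\eta)\ge \int_{P(E^*_z)}|\Phi_{k,z}(u)|^2_{g^k}\;\Big(\!-\sum_{\alpha,\beta}(B_k)_{\alpha\beta}\eta_\alpha\bar\eta_\beta\Big)\,\omega_z^{r-1},
\end{equation*}
i.e. $H_k(\Theta_k u,u)(\eta,\bar\eta)\le \int_{P(E^*_z)}|\Phi_{k,z}(u)|^2_{g^k}\sum_{\alpha,\beta}(B_k)_{\alpha\beta}\eta_\alpha\bar\eta_\beta\,\omega_z^{r-1}$, where the normalization $H_k(u,u)=1$ makes the integral of $|\Phi_{k,z}(u)|^2_{g^k}\omega_z^{r-1}$ equal to $1$.

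Next I would estimate $\sum_{\alpha,\beta}(B_k)_{\alpha\beta}\eta_\alpha\bar\eta_\beta$ pointwise on each $P(E^*_z)$. Cover $P(E^*_{z_0})$ by the finitely many neighborhoods $U_l$ from Lemma \ref{lem sum} and, after shrinking $W$, partition $P(E^*_z)$ into pieces $V_m$ each inside some $U_l$; on $U_l$ the coordinates are normal at the center $(z_0,[\zeta_l])$, so \eqref{ineq 2 in lem} gives $B_k/k\le (\phi_{\alpha\bar\beta})|_{(z_0,[\zeta_l])}+\tfrac{r-M}{2}\Id$. Now by \eqref{inf=kob} and \eqref{min}, evaluated at the center, $\sum_{\alpha,\beta}\phi_{\alpha\bar\beta}|_{(z_0,[\zeta_l])}\eta_\alpha\bar\eta_\beta=\inf_{q_*(\tilde\eta)=\eta}\Theta(g)(\tilde\eta,\bar{\tilde\eta})=-\theta(g)(\tilde\eta,\bar{\tilde\eta})$, which by hypothesis \eqref{thm 1 assumption} is $\le M\,q^*\Omega(\tilde\eta,\bar{\tilde\eta})=M\,\Omega(\eta,\bar\eta)$ evaluated at $z_0$. (Strictly, the hypothesis is at $z_0$, and we must transfer to nearby $z\in W$; that is exactly what the estimate \eqref{epsilon} on $\Omega_{\alpha\bar\beta}(z)-\delta_{\alpha\beta}$ with $\varepsilon=(r-M)/5(r+M)$ controls, converting the Euclidean value $|\eta|^2$ used in the normal frame at $z_0$ into $\Omega(\eta,\bar\eta)$ at $z$ up to a factor $(1-\varepsilon)^{-1}$.) Putting these together, on each $V_m$ we get $\sum_{\alpha,\beta}(B_k)_{\alpha\beta}\eta_\alpha\bar\eta_\beta\le k\big(M+\tfrac{r-M}{2}\big)|\eta|^2$, and then $|\eta|^2\le \Omega(\eta,\bar\eta)/(1-\varepsilon)$ by \eqref{epsilon}.

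Finally I would substitute this pointwise bound back into the fiber integral: since the bound $\big(M+\tfrac{r-M}{2}\big)k\,\Omega(\eta,\bar\eta)/(1-\varepsilon)$ is independent of the point on $P(E^*_z)$, it pulls out of the integral, and what remains is $\int_{P(E^*_z)}|\Phi_{k,z}(u)|^2_{g^k}\omega_z^{r-1}=H_k(u,u)=1$, yielding $H_k(\Theta_k u,u)(\eta,\bar\eta)\le \big(M+\tfrac{r-M}{2}\big)k\,\Omega(\eta,\bar\eta)/(1-\varepsilon)$, which is the claim. The main obstacle I expect is the bookkeeping in the first step: carefully expanding $q_*\partial\bar\partial(g^k(\Phi_{k,\mathcal z}(\tilde u),\Phi_{k,\mathcal z}(\tilde u))\,\Theta(g)^{r-1})$ and isolating the $(1,1)$-part in the base directions so that the Berndtsson cross-terms $\sum_{i,j}\phi_{\alpha\bar j}\phi^{i\bar j}\phi_{i\bar\beta}$ combine with $-k\phi_{\alpha\bar\beta}+(\log\det(\phi_{i\bar j}))_{\alpha\bar\beta}$ precisely into $-(B_k)_{\alpha\beta}$ plus a manifestly nonpositive remainder — this is the analogue of \eqref{positive detE} for $S^kE$ that Subsection \ref{subsec direct} only gestures at, and it must be done with enough care that the covariant-derivative-vanishing normalization of $\tilde u$ and the vanishing of $\phi_{\alpha\bar j}$ at the chart center are both used. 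The transfer from the chart center $(z_0,[\zeta_l])$ to nearby fibers, handled via \eqref{456}, \eqref{789} and \eqref{epsilon}, is the other place where constants must be tracked, but it is routine once $\varepsilon$ is chosen as above.
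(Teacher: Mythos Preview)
Your overall architecture matches the paper exactly: push the curvature down via (\ref{direct im}), obtain a pointwise upper bound on the integrand in terms of $B_k$, estimate $B_k$ using Lemma \ref{lem sum} together with hypothesis (\ref{thm 1 assumption}) via (\ref{min}), and finally absorb the discrepancy between $z$ and $z_0$ through (\ref{epsilon}). Your treatment of this second half is correct.

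You do, however, misidentify the mechanism in the first step. The paper does \emph{not} invoke any analogue of the Berndtsson inequality (\ref{positive detE}) here, and no cross-terms $\phi_{\alpha\bar j}\phi^{i\bar j}\phi_{i\bar\beta}$ appear. On each $V_m\subset U_l$ one writes $\Phi_{k,\mathcal z}(\tilde u)=f\,(e^*)^k$ with $f$ holomorphic in $(z,w)$; a Stokes/degree count reduces the push-forward to the pure base second derivative of $|f|^2e^{-k\phi}\det(\phi_{i\bar j})$. Setting $\psi:=k\phi-\log\det(\phi_{i\bar j})$, the elementary identity
\[
\sum_{\alpha,\beta}\partial_{z_\alpha}\partial_{\bar z_\beta}\big(|f|^2e^{-\psi}\big)\eta_\alpha\bar\eta_\beta
= e^{-\psi}\Big|\sum_\alpha(\partial_{z_\alpha}f - f\,\psi_{z_\alpha})\eta_\alpha\Big|^2
- |f|^2 e^{-\psi}\sum_{\alpha,\beta}\psi_{z_\alpha\bar z_\beta}\eta_\alpha\bar\eta_\beta
\]
produces a manifestly nonnegative complete square minus $|f|^2e^{-k\phi}\det(\phi_{i\bar j})\sum_{\alpha,\beta}(B_k)_{\alpha\beta}\eta_\alpha\bar\eta_\beta$. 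Dropping the square yields exactly the inequality you wrote, with no $\bar\partial$-equation solved. The covariant-derivative normalization of $\tilde u$ is used only to set up (\ref{direct im}), and the vanishing of $\phi_{\alpha\bar j}$ at the chart center is irrelevant at this stage; it enters only later, in identifying $\sum\phi_{\alpha\bar\beta}|_{(z_0,[\zeta_l])}\eta_\alpha\bar\eta_\beta$ with $-\theta(g)$ via (\ref{min}). Berndtsson's inequality (\ref{positive detE}) is reserved for Lemma \ref{lem ineq 2}, where a \emph{lower} bound on the curvature of $\det E$ is needed; here, for an \emph{upper} bound on $\Theta_k$, the crude complete-square trick suffices and is in fact simpler than what you anticipated.
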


\begin{proof}
As in Subsection \ref{subsec direct}, we extend the vector $u\in S^kE_z$ to a local holomorphic section $\tilde{u}$ whose covariant derivative at $z$ equals zero, and we have
\begin{align*}
-H_k(\Theta_k u, u)=&\partial \bar{\partial} H_k( \tilde{u},\tilde{u}  )\big|_{z}=\int_{P(E^*_z)} \partial \bar{\partial}\big( g^k( \Phi_{k,\mathcal{z}}(\tilde{u}),  \Phi_{k,\mathcal{z}}(\tilde{u})      )   \Theta(g)^{r-1} \big)\\
=&\sum_m\int_{V_m} \partial \bar{\partial}\big( g^k( \Phi_{k,\mathcal{z}}(\tilde{u}),  \Phi_{k,\mathcal{z}}(\tilde{u})      )   \Theta(g)^{r-1} \big);
\end{align*}
in the last equality, we partition the fiber $P(E^*_z)$ as $\bigcup_m V_m$ with each $V_m$ in $U_l$ for some $l$. In a fixed $V_m\subset U_l$, using the coordinate system of $U_l$, we can write $\Phi_{k,\mathcal{z}}(\tilde{u})$ as $f(e^*)^k$ with $f$ a scalar-valued holomorphic function and $e^*$ a local frame for $O_{P(E^*)}(1)$. So, $g^k( \Phi_{k,\mathcal{z}}(\tilde{u}),  \Phi_{k,\mathcal{z}}(\tilde{u})      )=|f|^2 e^{-k\phi}$. Meanwhile, recall the curvature $\Theta(g)=\partial
\bar{\partial }\phi$. By Stokes' Theorem and a count on degrees, we have 
\begin{align*}
    &\sum_m\int_{V_m} \partial \bar{\partial}\big( g^k( \Phi_{k,\mathcal{z}}(\tilde{u}),  \Phi_{k,\mathcal{z}}(\tilde{u})      )   \Theta(g)^{r-1} \big)\\
    =&\sum_m\int_{V_m}  \sum_{\alpha, \beta}\frac{\partial^2 |f|^2e^{-k\phi}\det(\phi_{i\bar{j}})}{\partial z_\alpha \partial\bar{z}_\beta}dz_\alpha \wedge d\bar{z}_\beta \bigwedge_j dw_j\wedge d\bar{w}_j.
\end{align*}
So, if the tangent vector $\eta=\sum_\alpha \eta_\alpha \partial/\partial z_\alpha$ in the coordinate neighborhood $U_l$, then    
\begin{equation}\label{curvature partition}
    -H_k(\Theta_k u, u)(\eta,\bar{\eta})=\sum_m\int_{V_m}  \sum_{\alpha, \beta}\frac{\partial^2 |f|^2e^{-k\phi}\det(\phi_{i\bar{j}})}{\partial z_\alpha \partial\bar{z}_\beta} \eta_\alpha  \bar{\eta}_\beta \bigwedge_j dw_j\wedge d\bar{w}_j.
\end{equation}
Note that the integrands in (\ref{curvature partition}) are written in the local coordinates of corresponding $U_l$.
A direct computation shows
\begin{align*}
    \sum_{\alpha, \beta}\frac{\partial^2 |f|^2e^{-k\phi}\det(\phi_{i\bar{j}})}{\partial z_\alpha \partial\bar{z}_\beta} \eta_\alpha  \bar{\eta}_\beta
    =&e^{-k\phi}\det (\phi_{i\bar{j}})  \big|  \sum_\alpha \frac{\partial f}{\partial  z_\alpha}\eta_\alpha-f\sum_\alpha \big(k\phi_{\alpha}-(\log\det\phi_{i\bar{j}})_\alpha    \big)\eta_\alpha      \big|^2\\
    &- |f|^2 e^{-k\phi}\det (\phi_{i\bar{j}}) \sum_{\alpha,\beta} \big(  k\phi_{\alpha\bar{\beta}}-(\log\det\phi_{i\bar{j}})_{\alpha\bar{\beta}}      \big)\eta_\alpha \bar{\eta}_\beta\\
    \geq& -|f|^2 e^{-k\phi}\det (\phi_{i\bar{j}}) \sum_{\alpha,\beta}   (B_k)_{\alpha \beta}      \eta_\alpha \bar{\eta}_\beta.
\end{align*}
By (\ref{ineq 2 in lem}),     
\begin{equation}\label{after 3.8}
    \frac{1}{k}\sum_{\alpha,\beta}   (B_k)_{\alpha \beta}      \eta_\alpha \bar{\eta}_\beta\leq   \sum_{\alpha, \beta}    \phi_{\alpha\bar{\beta}}|_{(z_0,[\zeta_l])}\eta_\alpha\bar{\eta}_\beta          + \frac{r-M}{2} \sum_\alpha |\eta_\alpha|^2.
\end{equation}
Using the coordinate system of $U_l$, the tangent vector $\eta=\sum_\alpha \eta_\alpha \partial /\partial z_\alpha$ at $z$ induces a tangent vector $\eta_l=\sum_\alpha \eta_\alpha \partial /\partial z_\alpha|_{z_0}$ at $z_0$. Denote the lifts of $\eta_l$ to $T^{1,0}_{(z_0,[\zeta_l])}P(E^*)$ by $\tilde{\eta}_l$. According to (\ref{inf=kob}), (\ref{thm 1 assumption}), and (\ref{min}), we see 
\begin{equation}
M\sum_\alpha |\eta_\alpha|^2    \geq  -\theta(g)(\tilde{\eta}_l, \bar{\tilde{\eta}}_l) =\inf_{q_*(\tilde{\eta}_l)=\eta_l} \Theta(g)(\tilde{\eta}_l,\bar{\tilde{\eta}}_l)=\sum_{\alpha,\beta} \phi_{\alpha\bar{\beta}}|_{(z_0,[\zeta_l])}\eta_\alpha \bar{\eta}_\beta.
\end{equation}
Therefore, (\ref{after 3.8}) becomes
\begin{equation}
    \frac{1}{k}\sum_{\alpha,\beta}   (B_k)_{\alpha \beta}      \eta_\alpha \bar{\eta}_\beta\leq     \big(M        + \frac{r-M}{2} \big)\sum_\alpha |\eta_\alpha|^2\leq      \big(M+\frac{r-M}{2} \big)\frac{\Omega (\eta,\bar{\eta})}{(1-\varepsilon)},
\end{equation}
where we use (\ref{epsilon}) in the second inequality. So, (\ref{curvature partition}) becomes
\begin{equation}\label{first term}
\begin{aligned}
-H_k(\Theta_k u, u)(\eta,\bar{\eta})&\geq   \sum_m\int_{V_m}   -|f|^2 e^{-k\phi}\det (\phi_{i\bar{j}})\bigwedge_j dw_j\wedge d\bar{w}_j \big(M+\frac{r-M}{2} \big)k\frac{\Omega (\eta,\bar{\eta})}{(1-\varepsilon)}\\
&=-\big(M+\frac{r-M}{2} \big)k\frac{\Omega (\eta,\bar{\eta})}{(1-\varepsilon)}
\end{aligned} 
\end{equation}
since $H_k(u,u)=1$.
\end{proof}

We turn now to the metric $h$. The argument about $h$ is similar to that about $g$, and it will be used in Theorems \ref{thm 2}, \ref{thm 3}, and \ref{thm 4}. Given a point $(z_0,[\zeta_0])\in P(E^*)$, we have the normal coordinate system from Subsection \ref{system} with respect to the metric $h$. In this coordinate system, let us introduce the $n$-by-$n$ matrix-valued function
\begin{align*}
    A=(A_{\alpha\beta}):=\big(  \phi_{\alpha\bar{\beta}}-\sum_{i,j}\phi_{\alpha \bar{j}}\phi^{i\bar{j}}\phi_{i\bar{\beta}}           \big),
\end{align*}
where $h(e^*,e^*)=e^{-\phi}$ and $(\phi^{i\bar{j}})$ is the inverse matrix of $(\phi_{i\bar{j}})$. By continuity, there is a neighborhood $U$ of $(z_0,[\zeta_0])$ in $P(E^*)$ such that in $U$
\begin{align}
    rA+\frac{r-M}{4}\Id_{n\times n}  \geq rA|_{(z_0,[\zeta_0])}.\label{123'} 
\end{align}
In summary, 
\begin{lemma}\label{lem sum'}
Given a point $(z_0,[\zeta_0])\in P(E^*)$, there exists a coordinate neighborhood $U$ of $(z_0,[\zeta_0])$ in $P(E^*)$ such that in $U$
\begin{align}
rA+\frac{r-M}{4}\Id_{n\times n}  \geq r(\phi_{\alpha\bar{\beta}})|_{(z_0,[\zeta_0])}.\label{ineq 1 in lem'}
\end{align}
\end{lemma}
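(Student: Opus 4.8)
The statement to prove is Lemma \ref{lem sum'}, which is a simple consequence of the continuity estimate \eqref{123'} together with the definition of $A$ and the normal coordinate system. Let me sketch how the proof goes.

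The key point is that at the center $(z_0, [\zeta_0])$ of the normal coordinate system built in Subsection \ref{system} with respect to $h$, the terms $\phi_{\alpha\bar{j}}$ vanish. This is exactly the content of \eqref{normal} combined with the identity $e^\phi = G(w_1 s_1 + \cdots + w_{r-1}s_{r-1} + s_r)$. So I need to:

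First, observe that $A_{\alpha\beta} = \phi_{\alpha\bar{\beta}} - \sum_{i,j} \phi_{\alpha\bar{j}}\phi^{i\bar{j}}\phi_{i\bar{\beta}}$ reduces at the point $(z_0,[\zeta_0])$ to just $\phi_{\alpha\bar{\beta}}|_{(z_0,[\zeta_0])}$, because the correction term is a sum of products each containing a factor $\phi_{\alpha\bar{j}}$ or $\phi_{i\bar{\beta}}$ which vanishes at the center by the normality condition \eqref{normal}. Hence $A|_{(z_0,[\zeta_0])} = (\phi_{\alpha\bar{\beta}})|_{(z_0,[\zeta_0])}$.

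Second, substitute this into \eqref{123'}, which by continuity of the entries of $A$ holds on some neighborhood $U$ of $(z_0,[\zeta_0])$, to get exactly \eqref{ineq 1 in lem'}.

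The main obstacle — if there is one at all — is just bookkeeping: being careful that the normal coordinates guaranteeing $\phi_{\alpha\bar{j}} = 0$ at the center are available for the metric $h$ (which they are, since $h$ also satisfies $\Theta(h)|_{P(E^*_z)} > 0$, being positively curved), so that Subsection \ref{system}'s construction applies verbatim to $h$.

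\begin{proof}
Recall that in the normal coordinate system of Subsection \ref{system} attached to $(z_0,[\zeta_0])$ and the metric $h$, we have $h(e^*,e^*)=e^{-\phi}$ with $e^\phi=G(w_1 s_1+\cdots+w_{r-1}s_{r-1}+s_r)$, where $G$ is the Finsler metric on $E^*$ corresponding to $h$; such coordinates exist because $\Theta(h)|_{P(E^*_z)}>0$ for all $z$. By the normality conditions \eqref{normal}, the mixed derivatives $\phi_{\alpha\bar{j}}$ vanish at $(z_0,[\zeta_0])$. Consequently, in the definition
\begin{equation*}
    A_{\alpha\beta}=\phi_{\alpha\bar{\beta}}-\sum_{i,j}\phi_{\alpha \bar{j}}\phi^{i\bar{j}}\phi_{i\bar{\beta}},
\end{equation*}
each term of the sum over $i,j$ contains a factor $\phi_{\alpha\bar{j}}$ (equivalently $\phi_{i\bar{\beta}}=\overline{\phi_{\beta\bar{i}}}$), which is zero at $(z_0,[\zeta_0])$; the entries $\phi^{i\bar{j}}$ are finite there since the matrix $(\phi_{i\bar{j}})$ is positive definite. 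Therefore
\begin{equation*}
    A\big|_{(z_0,[\zeta_0])}=\big(\phi_{\alpha\bar{\beta}}\big)\big|_{(z_0,[\zeta_0])}.
\end{equation*}
Substituting this equality into the right-hand side of \eqref{123'}, which by continuity of the entries of the matrix-valued function $A$ holds on a suitable coordinate neighborhood $U$ of $(z_0,[\zeta_0])$ in $P(E^*)$, we obtain
\begin{equation*}
    rA+\frac{r-M}{4}\Id_{n\times n}\geq r(\phi_{\alpha\bar{\beta}})\big|_{(z_0,[\zeta_0])}\quad\text{in }U,
\end{equation*}
which is precisely \eqref{ineq 1 in lem'}.
\end{proof}
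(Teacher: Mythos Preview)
Your proof is correct and follows exactly the paper's approach: the lemma is stated as a summary of the preceding discussion, and the only step needed to pass from \eqref{123'} to \eqref{ineq 1 in lem'} is the observation that $\phi_{\alpha\bar{j}}=0$ at $(z_0,[\zeta_0])$ in normal coordinates, whence $A|_{(z_0,[\zeta_0])}=(\phi_{\alpha\bar{\beta}})|_{(z_0,[\zeta_0])}$. You have supplied precisely this detail.
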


By Lemma \ref{lem sum'}, since $P(E^*_{z_0})$ is compact, we can find finitely many points $\{(z_0,[\zeta_l])\}_l$ on $P(E^*_{z_0})$ each of which corresponds to a coordinate neighborhood $U_l$ in $P(E^*)$ such that the corresponding (\ref{ineq 1 in lem'}) holds, and $P(E^*_{z_0})\subset\bigcup_l U_l$. The point $z_0$ has a neighborhood $W'$ in $X$ such that for $z\in W'$, the fiber $P(E^*_z)$ can be partitioned as $\bigcup_m V_m$ with each $V_m$ in $U_l$ for some $l$. By shrinking $W'$, we can assume that for each $U_l$ the corresponding $\Omega(\partial/\partial z_\alpha,\partial/\partial \bar{z}_\beta ):=\Omega_{\alpha\bar{\beta}}$ satisfies 
\begin{equation}\label{epsilon'}
-\varepsilon \delta_{\alpha\beta}    <\Omega_{\alpha\bar{\beta}}(z)-\delta_{\alpha\beta}<\varepsilon \delta_{\alpha\beta} \text{, for $z\in W'$}
\end{equation}
where $\varepsilon:=(r-M)/5(r+M)$.

Recall the Hermitian metric $H$ on $\det E$ in (\ref{L2 metric 2}) constructed using the metric $h$. Denote by $\Theta$ the curvature of $H$. We claim
\begin{lemma}\label{lem ineq 2}
For $z\in W'$ and $\eta\in T^{1,0}_zX$, we have 
\begin{equation}
    -\Theta(\eta,\bar{\eta})\leq -\big( r-\frac{r-M}{4} \big) \frac{\Omega(\eta,\bar{\eta})}{(1+\varepsilon)}.
\end{equation}
\end{lemma}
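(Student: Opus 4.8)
The plan is to run Berndtsson's inequality (\ref{positive detE}) for the $L^2$-metric $H$ on $\det E$ and bound the resulting fiber integral from below, parallel to the proof of Lemma \ref{lem ineq 1}. Fix $z\in W'$, $\eta=\sum_\alpha\eta_\alpha\,\partial/\partial z_\alpha\in T^{1,0}_zX$, and pick $v\in\det E_z$ with $H(v,v)=1$; since $\det E$ is a line bundle the left side of (\ref{positive detE}) is then $-\Theta(\eta,\bar{\eta})$, independent of $v$. As in Subsection \ref{subsec direct}, the chart underlying (\ref{positive detE}) does not cover the whole fiber, so I would partition $P(E^*_z)=\bigcup_m V_m$ with each $V_m$ inside one of the normal-coordinate neighborhoods $U_l$ of Lemma \ref{lem sum'}, and compute the integrand of (\ref{positive detE}) on $V_m$ in the coordinates of $U_l$; there $h(e^*,e^*)=e^{-\phi}$ and the bracketed term in (\ref{positive detE}) is exactly $-A_{\alpha\beta}$. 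So, up to sign, the integrand is $\rho(\Psi_z(v),\Psi_z(v))\big(r\sum_{\alpha,\beta}A_{\alpha\beta}\eta_\alpha\bar{\eta}_\beta\big)\omega_z^{r-1}$.

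The core step is a pointwise lower bound for $r\sum_{\alpha,\beta}A_{\alpha\beta}\eta_\alpha\bar{\eta}_\beta$ on $V_m\subset U_l$. Inequality (\ref{ineq 1 in lem'}) gives $r\sum_{\alpha,\beta}A_{\alpha\beta}\eta_\alpha\bar{\eta}_\beta\ge r\sum_{\alpha,\beta}\phi_{\alpha\bar{\beta}}|_{(z_0,[\zeta_l])}\eta_\alpha\bar{\eta}_\beta-\frac{r-M}{4}\sum_\alpha|\eta_\alpha|^2$. Since the $U_l$-coordinates are normal at $(z_0,[\zeta_l])$ with respect to $h$, formula (\ref{inf=kob}), the hypothesis $q^*\Omega\le-\theta(h)$, and (\ref{min}) applied to $h$ give, for a lift $\tilde{\eta}_l$ of $\eta_l:=\sum_\alpha\eta_\alpha\,\partial/\partial z_\alpha|_{z_0}$, the chain $\sum_{\alpha,\beta}\phi_{\alpha\bar{\beta}}|_{(z_0,[\zeta_l])}\eta_\alpha\bar{\eta}_\beta=-\theta(h)(\tilde{\eta}_l,\bar{\tilde{\eta}}_l)\ge q^*\Omega(\tilde{\eta}_l,\bar{\tilde{\eta}}_l)=\Omega(\eta_l,\bar{\eta}_l)=\sum_\alpha|\eta_\alpha|^2$, the last equality because $\Omega_{\alpha\bar{\beta}}(z_0)=\delta_{\alpha\beta}$ in these coordinates. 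Combining, $r\sum_{\alpha,\beta}A_{\alpha\beta}\eta_\alpha\bar{\eta}_\beta\ge\big(r-\frac{r-M}{4}\big)\sum_\alpha|\eta_\alpha|^2$, and (\ref{epsilon'}) turns $\sum_\alpha|\eta_\alpha|^2$ into $\Omega(\eta,\bar{\eta})/(1+\varepsilon)$; note $r-\frac{r-M}{4}=\frac{3r+M}{4}>0$, so the bound has the right sign.

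Finally I would reassemble the integral. Since $\rho(\Psi_z(v),\Psi_z(v))\ge0$ and $\omega_z=\Theta(h)|_{P(E^*_z)}>0$, the measure $\rho(\Psi_z(v),\Psi_z(v))\,\omega_z^{r-1}$ on each $V_m$ is nonnegative, so the pointwise bound integrates to
\begin{equation*}
\begin{aligned}
-\Theta(\eta,\bar{\eta})&\le-\sum_m\int_{V_m}\rho(\Psi_z(v),\Psi_z(v))\Big(r\sum_{\alpha,\beta}A_{\alpha\beta}\eta_\alpha\bar{\eta}_\beta\Big)\omega_z^{r-1}\\
&\le-\Big(r-\frac{r-M}{4}\Big)\frac{\Omega(\eta,\bar{\eta})}{1+\varepsilon}\int_{P(E^*_z)}\rho(\Psi_z(v),\Psi_z(v))\,\omega_z^{r-1},
\end{aligned}
\end{equation*}
and the last integral equals $H(v,v)=1$ by (\ref{L2 metric 2}), which is the claim. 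The step I expect to be the main obstacle is the localization bookkeeping around (\ref{positive detE}): one must check that after Stokes the integrand is the restriction of a globally defined $(r-1,r-1)$-form on $P(E^*_z)$ (so that summing over the $V_m$ and using the chart-dependent $\phi$-derivatives of each $U_l$ is legitimate), and that the nonnegativity of $\rho(\Psi_z(v),\Psi_z(v))\omega_z^{r-1}$ lets the pointwise estimate pass through the integral with the correct sign. Everything else mirrors the continuity-plus-curvature-comparison bookkeeping already carried out in the proof of Lemma \ref{lem ineq 1}.
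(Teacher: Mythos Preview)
Your proposal is correct and follows essentially the same approach as the paper: apply Berndtsson's inequality (\ref{positive detE}) with $H(v,v)=1$, partition $P(E^*_z)$ into pieces $V_m\subset U_l$, use (\ref{ineq 1 in lem'}) together with (\ref{inf=kob}), (\ref{thm 1 assumption 2}), (\ref{min}) to bound $r\sum A_{\alpha\beta}\eta_\alpha\bar\eta_\beta$ from below, convert $\sum_\alpha|\eta_\alpha|^2$ to $\Omega(\eta,\bar\eta)/(1+\varepsilon)$ via (\ref{epsilon'}), and integrate using $H(v,v)=1$. The localization concern you flag is handled exactly as in Lemma \ref{lem ineq 1} and is not an obstruction.
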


\begin{proof}
Using (\ref{positive detE}) and assuming $H(v,v)=1$, we get 
\begin{align}\label{positive detE partition}
    -\Theta(\eta,\bar{\eta})\leq \sum_m\int_{V_m} \rho (\Psi_{z}(v),\Psi_{z}(v) )   r \sum_{\alpha,\beta} \big(  \sum_{i,j}\phi_{\alpha \bar{j}}\phi^{i\bar{j}}\phi_{i\bar{\beta}}-\phi_{\alpha\bar{\beta}}   \big)\eta_\alpha \bar{\eta}_\beta  \omega_z^{r-1},  
\end{align}
where we again partition $P(E^*_z)$ as $\bigcup_m V_m$ with each $V_m$ in $U_l$ for some $l$. Note that the integrands in (\ref{positive detE partition}) are written in the local coordinates of corresponding $U_l$. In a fixed $V_m\subset U_l$, we have $\eta=\sum_\alpha \eta_\alpha \partial/\partial z_\alpha$, and by (\ref{ineq 1 in lem'}) we see     
\begin{equation}\label{ineq i dont know}
    r\sum_{\alpha,\beta} A_{\alpha\beta}\eta_\alpha \bar{\eta}_\beta+\frac{r-M}{4}\sum_{\alpha}|\eta_\alpha|^2\geq r\sum_{\alpha, \beta}    \phi_{\alpha\bar{\beta}}|_{(z_0,[\zeta_l])}\eta_\alpha\bar{\eta}_\beta. 
\end{equation}
In $U_l$, the tangent vector $\eta=\sum_\alpha \eta_\alpha \partial /\partial z_\alpha$ at $z$ induces a tangent vector $\eta_l=\sum_\alpha \eta_\alpha \partial /\partial z_\alpha|_{z_0}$ at $z_0$. Denote the lifts of $\eta_l$ to $T^{1,0}_{(z_0,[\zeta_l])}P(E^*)$ by $\tilde{\eta}_l$. By (\ref{inf=kob}), (\ref{thm 1 assumption 2}), and (\ref{min}), we see 
\begin{equation}
\sum_\alpha |\eta_\alpha|^2    \leq -\theta(h)(\tilde{\eta}_l,\bar{\tilde{\eta}}_l) =\inf_{q_*(\tilde{\eta}_l)=\eta_l} \Theta(h)(\tilde{\eta}_l,\bar{\tilde{\eta}}_l)=\sum_{\alpha,\beta} \phi_{\alpha\bar{\beta}}|_{(z_0,[\zeta_l])}\eta_\alpha \bar{\eta}_\beta.
\end{equation}
Therefore, (\ref{ineq i dont know}) becomes
\begin{equation*}
     r\sum_{\alpha,\beta} A_{\alpha\beta}\eta_\alpha \bar{\eta}_\beta\geq \big( r-\frac{r-M}{4} \big)\sum_\alpha |\eta_\alpha|^2 \geq \big( r-\frac{r-M}{4} \big) \frac{\Omega(\eta,\bar{\eta})}{1+\varepsilon},
\end{equation*}
where we use (\ref{epsilon'}) in the second inequality. So, (\ref{positive detE partition}) becomes
\begin{equation*}\label{second term}
    -\Theta(\eta,\bar{\eta})\leq -\sum_m\int_{V_m} \rho (\Psi_{z}(v),\Psi_{z}(v) )   \omega_z^{r-1}   \big( r-\frac{r-M}{4} \big) \frac{\Omega(\eta,\bar{\eta})}{1+\varepsilon}= -\big( r-\frac{r-M}{4} \big) \frac{\Omega(\eta,\bar{\eta})}{(1+\varepsilon)}
\end{equation*}
because $H(v,v)=1$. 
\end{proof}

Now we put together the $L^2$-metrics $H_k$ on $S^kE$ in (\ref{L2 metric}), and $H$ on $\det E$ in (\ref{L2 metric 2}). Since $(\det E^*)^k$ is a line bundle, we can identify $\End (S^kE\otimes (\det E^*)^k)$ with $\End (S^kE)$, and the curvature of the metric $H_k\otimes (H^*)^k$ on $S^kE\otimes (\det E^*)^k$ can be written as $$\Theta_k-k\Theta\otimes \Id_{S^kE},$$ where $\Theta_k$ and $\Theta$ are the curvature of $H_k$ and $H$ respectively. We claim that for $k\geq k_{\max}$ and in $W\bigcap W'$ a neighborhood of $z_0$, the metric $H_k\otimes (H^*)^k$ is Griffiths negative. Indeed, as a result of Lemmas \ref{lem ineq 1}  and \ref{lem ineq 2}, for $k\geq k_{\max}$, $z\in W\bigcap W'$, $0\neq \eta \in T^{1,0}_z X$, and $ u\in S^kE_z$ with $H_k(u,u)=1$, we see
\begin{equation*}
    H_k(\Theta_k u, u)(\eta,\bar{\eta})-k\Theta(\eta,\bar{\eta})\leq  k\big(M+\frac{r-M}{2} \big)\frac{\Omega (\eta,\bar{\eta})}{(1-\varepsilon)}     -k\big( r-\frac{r-M}{4} \big) \frac{\Omega(\eta,\bar{\eta})}{(1+\varepsilon)};
\end{equation*}
the term on the right is negative after some computation using $\varepsilon=(r-M)/5(r+M)$. So, we have proved the claim that for $k\geq k_{\max}$ and in $W\bigcap W'\subset X$, the metric $H_k\otimes (H^*)^k$ is Griffiths negative. Since $X$ is compact, $H_k\otimes (H^*)^k$ is Griffiths negative on the entire $X$ for $k$ large enough.

Now we fix $k$ such that the Hermitian metric $H_k\otimes (H^*)^k$ on the bundle $S^kE \otimes (\det E^*)^k$ is Griffiths negative on $X$. The Hermitian metric $H_k$ by construction is an $L^2$-integral, so its $k$-th root is a convex Finsler metric on $E$ (see \cite[Proof of Theorem 1]{wu_2022} for details). By Lemma \ref{convex}, the $k$-th root of $H_k\otimes (H^*)^k$ is a convex Finsler metric on $E \otimes \det E^*$ which we denote by $F$. Moreover, this Finsler metric $F$ is strongly plurisubharmonic on $E \otimes \det E^* \setminus \{\text{zero section}\}$ due to Griffiths negativity of $H_k\otimes (H^*)^k$. By adding a small Hermitian metric, we can assume $F$ is strongly convex and strongly plurisubharmonic. As in \cite[Proof of Theorem 1]{wu_2022}, the dual Finsler metric of $F$ is a convex, strongly pseudoconvex, and Kobayashi positive Finsler metric on $E^* \otimes \det E$, hence the proof of Theorem \ref{thm 1} is complete.

\section{Proof of Theorem \ref{thm 2}}\label{sec pf o thm2}
The proof is similar to what we do in Section \ref{sec pf of thm1} except that we are dealing with not only $P(E^*)$ but $P(E)$ here. The metric $h$ is used to define a Hermitian metric $H$ on $\det E$ as in (\ref{L2 metric 2}). The metric $g$ is used to define Hermitian metrics $\boldsymbol{H}_k$ on $S^kE^*\otimes (\det E)^k$ as in (\ref{L2 metric 3}).

Fix $z_0$ in $X$. For the metric $h$ on $O_{P(E^*)}(1)$, we follow the path that leads to Lemma \ref{lem ineq 2} in Section \ref{sec pf of thm1} to deduce a neighborhood $W'$ of $z_0$ in $X$ such that for $z\in W'$ and $\eta\in T^{1,0}_zX$, the curvature $\Theta$ of $H$ satisfies 
\begin{equation}\label{one}
    -\Theta(\eta,\bar{\eta})\leq -\big( r-\frac{r-M}{4} \big) \frac{\Omega(\eta,\bar{\eta})}{(1+\varepsilon)}
\end{equation}
with $\varepsilon=(r-M)/5(r+M)$.

For the metric $g$ on $O_{P(E)}(1)\otimes p^*\det E$, we replace $O_{P(E^*)}(1)\to P(E^*)$ in Section \ref{sec pf of thm1} with $O_{P(E\otimes \det E^*)}(1)\to P(E\otimes \det E^*)$ and use the canonical isomorphism between $O_{P(E\otimes \det E^*)}(1)\to P(E\otimes \det E^*)$ and $O_{P(E)}(1)\otimes p^*\det E \to P(E)$. Then following the argument leading to Lemma \ref{lem ineq 1}, we obtain a positive integer $k_{\max}$ and a neighborhood $W$ of $z_0$ in $X$ such that for $k\geq k_{\max}$, $z\in W$, $\eta\in T^{1,0}_zX$, and $u\in S^kE_z^*\otimes (\det E_z)^k$ with $\boldsymbol{H}_k(u,u)=1$, the curvature $\boldsymbol{\Theta}_k$ of $\boldsymbol{H}_k$ satisfies
\begin{equation}\label{two}
\boldsymbol{H}_k(\boldsymbol{\Theta}_k u, u)(\eta,\bar{\eta})\leq \big(M+\frac{r-M}{2} \big)k\frac{\Omega (\eta,\bar{\eta})}{(1-\varepsilon)}.
\end{equation}

On the bundle $[S^kE^*\otimes (\det E)^k]\otimes (\det E^*)^k$, there is a Hermitian metric $\boldsymbol{H}_k\otimes (H^*)^k$ with curvature $\boldsymbol{\Theta}_k-k\Theta\otimes \Id_{S^kE^*\otimes (\det E)^k}$. As a result of (\ref{one}) and (\ref{two}), we deduce that, for $k\geq k_{\max }$, $z\in W\bigcap W'$, $\eta\in T^{1,0}_zX$, and $u\in S^kE_z^*\otimes (\det E_z)^k$ with $\boldsymbol{H}_k(u,u)=1$,
\begin{equation*}
      \boldsymbol{H}_k(\boldsymbol{\Theta}_k u, u)(\eta,\bar{\eta})-k\Theta(\eta,\bar{\eta}) \leq  k\big(M+\frac{r-M}{2} \big)\frac{\Omega (\eta,\bar{\eta})}{(1-\varepsilon)}     -k\big( r-\frac{r-M}{4} \big) \frac{\Omega(\eta,\bar{\eta})}{(1+\varepsilon)};
\end{equation*}
again, the term on the right is negative using $\varepsilon=(r-M)/5(r+M)$. So we have proved that for $k\geq k_{\max }$ and in $W\bigcap W'$, the metric $\boldsymbol{H}_k\otimes (H^*)^k$ is Griffiths negative. Since $X$ is compact, $\boldsymbol{H}_k\otimes (H^*)^k$ is Griffiths negative on $X$ for $k$ large.

Now we fix $k$ such that $\boldsymbol{H}_k\otimes (H^*)^k$ on the bundle $[S^kE^*\otimes (\det E)^k]\otimes (\det E^*)^k\simeq S^kE^* $ is Griffiths negative. Using the same argument as in the last paragraph in Section \ref{sec pf of thm1}, we obtain a convex, strongly pseudoconvex, Kobayashi positive Finsler metric on $E$.

\section{Proof of Theorem \ref{thm 3}}\label{sec pf of thm3}
We use the metric $h$ to construct a Hermitian metric $H$ on $\det E$ as in (\ref{L2 metric 2}), and the metric $g$ to construct a Hermitian metric $H_1$ on $S^1E=E$ as in (\ref{L2 metric}).

We start with the metric $g$. For $(z_0,[\zeta_0])$ in $P(E^*)$, there is a special coordinate system given in Subsection \ref{sub 2.5}. In this coordinate system, we define the following $n$-by-$n$ matrix-valued function
\begin{align*}
        B=(B_{\alpha\beta}):=\big( \phi_{\alpha\bar{\beta}}- (\log \det( \phi_{i\bar{j}}))_{\alpha\bar{\beta}}      \big),
\end{align*}
where $g(e^*,e^*)=e^{-\phi}$.
By continuity, there is a neighborhood $U$ of $(z_0,[\zeta_0])$ in $P(E^*)$ such that in $U$
\begin{align*}
       B|_{(z_0,[\zeta_0])}+\frac{r-M}{4}\Id_{n\times n}\geq B.
\end{align*}
In summary,
\begin{lemma}\label{lem sum''}
Given a point $(z_0,[\zeta_0])\in P(E^*)$, there exists a coordinate neighborhood $U$ of $(z_0,[\zeta_0])$ in $P(E^*)$ such that in $U$ 
\begin{align}
B|_{(z_0,[\zeta_0])}+\frac{r-M}{4}\Id_{n\times n}\geq B.\label{ineq 2 in lem'}
\end{align}
\end{lemma}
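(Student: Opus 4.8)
The plan is to observe that (\ref{ineq 2 in lem'}) is an open condition which holds with strict inequality at the base point $(z_0,[\zeta_0])$, and hence persists on a neighborhood; this parallels exactly the derivations of Lemmas~\ref{lem sum} and~\ref{lem sum'}, and in fact the displayed inequality immediately preceding the lemma is the statement itself, so all that is really needed is to spell out the continuity step and note that $U$ may be taken to be a coordinate neighborhood.

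First I would check that $B=(B_{\alpha\beta})$ is a continuous, indeed smooth, Hermitian-matrix-valued function on the coordinate chart around $(z_0,[\zeta_0])$ supplied by Subsection~\ref{sub 2.5}. The entries $\phi_{\alpha\bar\beta}=\partial^2\phi/\partial z_\alpha\partial\bar z_\beta$ are smooth because $g$ is a smooth metric and $\phi=-\log g(e^*,e^*)$. For the entries $(\log\det(\phi_{i\bar j}))_{\alpha\bar\beta}$ one uses that, in this coordinate system, $\Theta(g)|_{P(E^*_z)}=\sum_{i,j}\phi_{i\bar j}\,dw_i\wedge d\bar w_j$, which is positive because $g$ is induced from a Hermitian metric on $E^*$; hence $\det(\phi_{i\bar j})>0$ near $(z_0,[\zeta_0])$, so $\log\det(\phi_{i\bar j})$ is smooth there and its $z$-Hessian $(\log\det(\phi_{i\bar j}))_{\alpha\bar\beta}$ is continuous. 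Both summands being Hermitian, $B$ is a continuous Hermitian-matrix-valued function on the chart.

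Next, since $M\in[1,r)$ we have $r-M>0$, so $\frac{r-M}{4}\Id_{n\times n}$ is positive definite. Evaluated at $(z_0,[\zeta_0])$, the Hermitian matrix $B|_{(z_0,[\zeta_0])}+\frac{r-M}{4}\Id_{n\times n}-B$ equals exactly $\frac{r-M}{4}\Id_{n\times n}>0$. Positive definiteness is an open condition on the entries of a Hermitian matrix (the smallest eigenvalue depends continuously on the entries and is here strictly positive), so, by continuity of $B$, there is a neighborhood of $(z_0,[\zeta_0])$ in $P(E^*)$ on which $B|_{(z_0,[\zeta_0])}+\frac{r-M}{4}\Id_{n\times n}-B\ge 0$. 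Shrinking it if necessary so that it sits inside the chart gives the desired coordinate neighborhood $U$ and (\ref{ineq 2 in lem'}).

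There is no substantive obstacle: this is a bookkeeping lemma that records the continuity step immediately preceding it (and is used later exactly as Lemmas~\ref{lem sum} and~\ref{lem sum'} are). The only point deserving a word is the smoothness of $\log\det(\phi_{i\bar j})$ near $(z_0,[\zeta_0])$, which rests on the fiberwise positivity $\Theta(g)|_{P(E^*_z)}>0$ built into the metric $g$ of Theorem~\ref{thm 3}; everything else is openness of positive definiteness together with the strict inequality $r-M>0$.
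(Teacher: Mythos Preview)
Your proposal is correct and matches the paper's approach exactly: the paper simply writes ``By continuity, there is a neighborhood $U$ of $(z_0,[\zeta_0])$ in $P(E^*)$ such that in $U$ \ldots'' and then restates this as the lemma, so your elaboration of the continuity step (smoothness of $B$, positivity of $\frac{r-M}{4}\Id$, openness of positive semidefiniteness) is just a careful unpacking of that one word.
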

By Lemma \ref{lem sum''}, since $P(E^*_{z_0})$ is compact, we can find finitely many points $\{(z_0,[\zeta_l])\}_l$ on $P(E^*_{z_0})$ each of which corresponds to a coordinate neighborhood $U_l$ in $P(E^*)$ such that the corresponding (\ref{ineq 2 in lem'}) holds, and $P(E^*_{z_0})\subset \bigcup_l U_l$. The fiber $P(E^*_{z_0})$ can be partitioned as $\bigcup_m V_m$ with each $V_m$ in $U_l$ for some $l$.
 
Recall the Hermitian metric $H_1$ on $E$ in (\ref{L2 metric}) constructed using the metric $g$. Denote by $\Theta_1$ the curvature of $H_1$. We claim 
\begin{lemma}\label{what ever}
For $0\neq \eta \in T^{1,0}_{z_0} X$ and $ u\in E_{z_0}$ with $H_1(u,u)=1$, we have 
\begin{equation}
H_1(\Theta_1 u, u)(\eta,\bar{\eta})\leq   \big(M+\frac{r-M}{4} \big)\Omega (\eta,\bar{\eta}).
\end{equation}
\end{lemma}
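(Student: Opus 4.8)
The plan is to run, with $k=1$, the same argument that produces Lemma \ref{lem ineq 1} in Section \ref{sec pf of thm1}, the one new feature being that the term $(\log\det(\phi_{i\bar{j}}))_{\alpha\bar{\beta}}$ inside $B$ no longer vanishes at $z_0$ and must be controlled through \eqref{sup}--\eqref{gamma 1} and the hypothesis \eqref{thm 3 assumption}. Because here $g$ is induced from the Hermitian metric $G$, the coordinate systems of Subsection \ref{sub 2.5} already satisfy the vanishing \eqref{vanish}, so — unlike in Lemma \ref{lem ineq 1} — no $\varepsilon$-perturbation of $\Omega$ and no passage to a neighborhood $W$ of $z_0$ is needed; the whole computation lives on the single fiber $P(E^*_{z_0})$.

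Concretely, I would fix $0\neq\eta\in T^{1,0}_{z_0}X$ and $u\in E_{z_0}$ with $H_1(u,u)=1$, extend $u$ to a local holomorphic section $\tilde u$ of $E$ whose $H_1$-covariant derivative vanishes at $z_0$, and apply \eqref{direct im} to get $-H_1(\Theta_1 u,u)=q_*\partial\bar\partial\big(g(\Phi_{1,\mathcal z}(\tilde u),\Phi_{1,\mathcal z}(\tilde u))\,\Theta(g)^{r-1}\big)\big|_{z_0}$. Then I partition $P(E^*_{z_0})=\bigcup_m V_m$ with each $V_m$ inside some $U_l$ from Lemma \ref{lem sum''}, write $\Phi_{1,\mathcal z}(\tilde u)=f(e^*)$ on $V_m\subset U_l$ with $f$ holomorphic and $e^*$ a local frame of $O_{P(E^*)}(1)$, so that $g(\Phi_{1,\mathcal z}(\tilde u),\Phi_{1,\mathcal z}(\tilde u))=|f|^2e^{-\phi}$ and $\Theta(g)=\partial\bar\partial\phi$. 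Exactly as in the proof of Lemma \ref{lem ineq 1} (Stokes' theorem and a degree count),
$$-H_1(\Theta_1 u,u)(\eta,\bar\eta)=\sum_m\int_{V_m}\sum_{\alpha,\beta}\frac{\partial^2\,|f|^2e^{-\phi}\det(\phi_{i\bar j})}{\partial z_\alpha\partial\bar z_\beta}\eta_\alpha\bar\eta_\beta\bigwedge_j dw_j\wedge d\bar w_j,$$
with $\eta=\sum_\alpha\eta_\alpha\partial/\partial z_\alpha$ in the chart $U_l$; expanding the second derivative and discarding the manifestly nonnegative ``gradient'' term yields the pointwise bound $\sum_{\alpha,\beta}\partial_{z_\alpha}\partial_{\bar z_\beta}\big(|f|^2e^{-\phi}\det(\phi_{i\bar j})\big)\eta_\alpha\bar\eta_\beta\geq -|f|^2e^{-\phi}\det(\phi_{i\bar j})\sum_{\alpha,\beta}B_{\alpha\beta}\eta_\alpha\bar\eta_\beta$.

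The heart of the argument is the estimate of $\sum_{\alpha,\beta}B_{\alpha\beta}\eta_\alpha\bar\eta_\beta$. By \eqref{ineq 2 in lem'} it suffices to bound $\sum_{\alpha,\beta}B_{\alpha\beta}|_{(z_0,[\zeta_l])}\eta_\alpha\bar\eta_\beta$. Splitting $B_{\alpha\beta}=\phi_{\alpha\bar\beta}-(\log\det(\phi_{i\bar j}))_{\alpha\bar\beta}$ and using \eqref{min 2} with \eqref{Herm} for the first term and \eqref{min'} with \eqref{sup} (equivalently \eqref{gamma 1}) for the second, one finds, for a lift $\tilde\eta_l$ of $\eta$ at $(z_0,[\zeta_l])$,
$$\sum_{\alpha,\beta}B_{\alpha\beta}|_{(z_0,[\zeta_l])}\eta_\alpha\bar\eta_\beta=-\theta(g)(\tilde\eta_l,\bar{\tilde\eta}_l)-\big(r\theta(g)(\tilde\eta_l,\bar{\tilde\eta}_l)-q^*\Theta(\det G)(\tilde\eta_l,\bar{\tilde\eta}_l)\big)=-(r+1)\theta(g)(\tilde\eta_l,\bar{\tilde\eta}_l)+q^*\Theta(\det G)(\tilde\eta_l,\bar{\tilde\eta}_l).$$
Hypothesis \eqref{thm 3 assumption} bounds the right-hand side by $M\,q^*\Omega(\tilde\eta_l,\bar{\tilde\eta}_l)=M\Omega(\eta,\bar\eta)=M\sum_\alpha|\eta_\alpha|^2$, the last equality because the $z$-coordinates of $U_l$ are $\Omega$-orthonormal at $z_0$. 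Combining with \eqref{ineq 2 in lem'} gives $\sum_{\alpha,\beta}B_{\alpha\beta}\eta_\alpha\bar\eta_\beta\leq\big(M+\tfrac{r-M}{4}\big)\Omega(\eta,\bar\eta)$, and feeding this back, together with $H_1(u,u)=\sum_m\int_{V_m}|f|^2e^{-\phi}\det(\phi_{i\bar j})\bigwedge_j dw_j\wedge d\bar w_j=1$, yields $-H_1(\Theta_1 u,u)(\eta,\bar\eta)\geq-\big(M+\tfrac{r-M}{4}\big)\Omega(\eta,\bar\eta)$, which is the claim.

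The routine part is the transcription of the Section \ref{sec pf of thm1} computation to $k=1$; the genuinely delicate points are (i) recognizing the identity above, i.e.\ that $B|_{(z_0,[\zeta_l])}$ is precisely the combination $-(r+1)\theta(g)+q^*\Theta(\det G)$ appearing in \eqref{thm 3 assumption}, which is exactly why the constant is tuned the way it is; and (ii) the bookkeeping across the finitely many charts $U_l$, each carrying its own $\Omega$-orthonormal $z$-coordinates at $z_0$. For (ii) one notes that any two such coordinate systems are related at $z_0$ by a unitary change, under which $\sum_\alpha|\eta_\alpha|^2$ is invariant and the Hermitian matrix $(B_{\alpha\beta})$ transforms by conjugation, so all quantities above are chart-independent; this is what lets the whole estimate be stated cleanly at the single point $z_0$ without any perturbation.
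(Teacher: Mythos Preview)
Your proof is correct and follows the paper's argument essentially line by line: the same partition of $P(E^*_{z_0})$ via Lemma \ref{lem sum''}, the same expansion of the curvature integral, and the same identification of $\sum_{\alpha,\beta} B_{\alpha\beta}|_{(z_0,[\zeta_l])}\eta_\alpha\bar\eta_\beta$ with $\big(-(r+1)\theta(g)+q^*\Theta(\det G)\big)(\tilde\eta_l,\bar{\tilde\eta}_l)$ through \eqref{min 2}, \eqref{min'}, and \eqref{sup}, after which \eqref{thm 3 assumption} and \eqref{ineq 2 in lem'} finish the job. One minor remark: in the Hermitian setting of Subsection \ref{sub 2.5} the base coordinates $\{z_\alpha\}$ depend only on $z_0$ (the normal frame for $G$ is chosen once), so your bookkeeping concern (ii) about different $\Omega$-orthonormal $z$-charts is in fact moot here.
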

\begin{proof}
As in Subsection \ref{subsec direct}, we extend the vector $u\in E_{z_0}$ to a local holomorphic section $\tilde{u}$ whose covariant derivative at $z_0$ equals zero, and we have
\begin{align*}
-H_1(\Theta_1 u, u)=\partial \bar{\partial} H_1( \tilde{u},\tilde{u}  )\big|_{z_0}=&\int_{P(E^*_{z_0})} \partial \bar{\partial}\big( g( \Phi_{1,\mathcal{z}}(\tilde{u}),  \Phi_{1,\mathcal{z}}(\tilde{u})      )   \Theta(g)^{r-1} \big)\\
=&\sum_m\int_{V_m} \partial \bar{\partial}\big( g( \Phi_{1,\mathcal{z}}(\tilde{u}),  \Phi_{1,\mathcal{z}}(\tilde{u})      )   \Theta(g)^{r-1} \big).
\end{align*}
In a fixed $V_m \subset U_l$, we can write $\Phi_{1,\mathcal{z}}(\tilde{u})$ as $fe^*$ with $f$ a scalar-valued holomorphic function and $e^*$ a local frame for $O_{P(E^*)}(1)$. So, $g( \Phi_{1,\mathcal{z}}(\tilde{u}),  \Phi_{1,\mathcal{z}}(\tilde{u})      )=|f|^2 e^{-\phi}$. Meanwhile, recall the curvature $\Theta(g)=\partial
\bar{\partial }\phi$. By Stokes' Theorem and a count on degrees, we have 
\begin{align*}
    &\sum_m\int_{V_m} \partial \bar{\partial}\big( g( \Phi_{1,\mathcal{z}}(\tilde{u}),  \Phi_{1,\mathcal{z}}(\tilde{u})      )   \Theta(g)^{r-1} \big)\\
    =&\sum_m\int_{V_m}  \sum_{\alpha, \beta}\frac{\partial^2 |f|^2e^{-\phi}\det(\phi_{i\bar{j}})}{\partial z_\alpha \partial\bar{z}_\beta}dz_\alpha \wedge d\bar{z}_\beta \bigwedge_j dw_j\wedge d\bar{w}_j.
\end{align*}
So,     
\begin{equation}\label{curvature partition'}
    -H_1(\Theta_1 u, u)(\eta,\bar{\eta})=\sum_m\int_{V_m}  \sum_{\alpha, \beta}\frac{\partial^2 |f|^2e^{-\phi}\det(\phi_{i\bar{j}})}{\partial z_\alpha \partial\bar{z}_\beta} \eta_\alpha  \bar{\eta}_\beta \bigwedge_j dw_j\wedge d\bar{w}_j,
\end{equation}
for $T^{1,0}_{z_0}X \ni \eta=\sum_\alpha \eta_\alpha \partial /\partial z_\alpha$. A direct computation shows
\begin{align*}
    \sum_{\alpha, \beta}\frac{\partial^2 |f|^2e^{-\phi}\det(\phi_{i\bar{j}})}{\partial z_\alpha \partial\bar{z}_\beta} \eta_\alpha  \bar{\eta}_\beta
    =&e^{-\phi}\det (\phi_{i\bar{j}})  \big|  \sum_\alpha \frac{\partial f}{\partial  z_\alpha}\eta_\alpha-f\sum_\alpha \big(\phi_{\alpha}-(\log\det\phi_{i\bar{j}})_\alpha    \big)\eta_\alpha      \big|^2\\
    &- |f|^2 e^{-\phi}\det (\phi_{i\bar{j}}) \sum_{\alpha,\beta} \big(  \phi_{\alpha\bar{\beta}}-(\log\det\phi_{i\bar{j}})_{\alpha\bar{\beta}}      \big)\eta_\alpha \bar{\eta}_\beta\\
    \geq& -|f|^2 e^{-\phi}\det (\phi_{i\bar{j}}) \sum_{\alpha,\beta}   B_{\alpha \beta}      \eta_\alpha \bar{\eta}_\beta.
\end{align*}
By (\ref{sup}), (\ref{thm 3 assumption}), (\ref{min 2}), and (\ref{min'}), we see \begin{align*}
M\sum_\alpha |\eta_\alpha|^2    \geq& -(r+1)\theta(g)(\tilde{\eta},\bar{\tilde{\eta}})+q^*\Theta(\det G)(\tilde{\eta},\bar{\tilde{\eta}})\\
=&\inf_{q_*(\tilde{\eta})=\eta} \Theta(g)(\tilde{\eta},\bar{\tilde{\eta}})-
\sup_{q_*(\tilde{\eta})=\eta} \gamma_g (\tilde{\eta},\bar{\tilde{\eta}})\\
=&\sum_{\alpha,\beta} \phi_{\alpha\bar{\beta}}|_{(z_0,[\zeta_l])}\eta_\alpha \bar{\eta}_\beta-\sum_{\alpha,\beta} (\log \det (\phi_{i\bar{j}}))_{\alpha\bar{\beta}}|_{(z_0,[\zeta_l])} \eta_\alpha \bar{\eta}_\beta\\
=&\sum_{\alpha,\beta} B_{\alpha \beta}|_{(z_0,[\zeta_l])}\eta_\alpha \bar{\eta}_\beta.
\end{align*}
Therefore, (\ref{ineq 2 in lem'}) becomes
\begin{equation}
    \sum_{\alpha,\beta}   B_{\alpha \beta}      \eta_\alpha \bar{\eta}_\beta\leq     \big(M        + \frac{r-M}{4} \big)\sum_\alpha |\eta_\alpha|^2=      \big(M+\frac{r-M}{4} \big)\Omega (\eta,\bar{\eta}).
\end{equation}
So (\ref{curvature partition'}) becomes
\begin{equation}\label{first term'}
\begin{aligned}
-H_1(\Theta_1 u, u)(\eta,\bar{\eta})&\geq   \sum_m\int_{V_m}   -|f|^2 e^{-\phi}\det (\phi_{i\bar{j}})\bigwedge_j dw_j\wedge d\bar{w}_j \big(M+\frac{r-M}{4} \big)\Omega (\eta,\bar{\eta})\\
&=-\big(M+\frac{r-M}{4} \big)\Omega (\eta,\bar{\eta})
\end{aligned} 
\end{equation}
since $H_1(u,u)=1$. 
\end{proof}

For the metric $h$ on $O_{P(E^*)}(1)$, as in Lemma \ref{lem ineq 2} from Section \ref{sec pf of thm1} with slight modification, we deduce that for $\eta\in T^{1,0}_{z_0}X$, the curvature $\Theta$ of $H$ satisfies 
\begin{equation}\label{whatever}
    -\Theta(\eta,\bar{\eta})\leq -\big( r-\frac{r-M}{4} \big) \Omega(\eta,\bar{\eta}).
\end{equation}

Finally, we consider the metric $H_1\otimes H^*$ on $E\otimes \det E^*$. Since $\det E^*$ is a line bundle, we can identify $\End (E\otimes \det E^*)$ with $\End E$, and the curvature of the metric $H_1\otimes H^*$ can be written as $$\Theta_1-\Theta\otimes \Id_{E},$$ where $\Theta_1$ and $\Theta$ are the curvature of $H_1$ and $H$ respectively. 
As a result of Lemma \ref{what ever} and (\ref{whatever}), we see for $0\neq \eta \in T^{1,0}_{z_0} X$ and $ u\in E_{z_0}$ with $H_1(u,u)=1$,
\begin{equation*}
    H_1(\Theta_1 u, u)(\eta,\bar{\eta})-\Theta(\eta,\bar{\eta})\leq \big(M+\frac{r-M}{4} \big)\Omega (\eta,\bar{\eta})     -\big( r-\frac{r-M}{4} \big) \Omega(\eta,\bar{\eta});
\end{equation*}
the term on the right is negative. Hence we have proved that at $z_0$ the metric $H_1\otimes H^*$ is Griffiths negative. The point $z_0$ is arbitrary, so $H_1\otimes H^*$ is Griffiths negative on $X$. As a result, the dual bundle $E^*\otimes \det E$ is Griffiths positive.

\section{Proof of Theorem \ref{thm 4}}\label{sec pf of thm 4}
The metric $h$ is used to define a Hermitian metric $H$ on $\det E$ as in (\ref{L2 metric 2}). The metric $g$ is used to define Hermitian metric $\boldsymbol{H}_1$ on $E^*\otimes \det E$ as in (\ref{L2 metric 3}).

Given $z_0$ in $X$. For the metric $h$ on $O_{P(E^*)}(1)$, as in the formula (\ref{whatever}) from Section \ref{sec pf of thm3}, we have for $\eta\in T^{1,0}_{z_0} X$
\begin{equation}\label{one'}
    -\Theta(\eta,\bar{\eta})\leq -\big( r-\frac{r-M}{4} \big)\Omega(\eta,\bar{\eta}).
\end{equation}

For the metric $g$ on $O_{P(E)}(1)\otimes p^*\det E$, we replace $O_{P(E^*)}(1)\to P(E^*)$ in Section \ref{sec pf of thm3} with $O_{P(E\otimes \det E^*)}(1)\to P(E\otimes \det E^*)$ and use the canonical isomorphism between $O_{P(E\otimes \det E^*)}(1)\to P(E\otimes \det E^*)$ and $O_{P(E)}(1)\otimes p^*\det E \to P(E)$. Then as in Lemma \ref{what ever}, we get for $\eta\in T^{1,0}_{z_0}X$, and $u\in E_{z_0}^*\otimes (\det E_{z_0})$ with $\boldsymbol{H}_1(u,u)=1$, the curvature $\boldsymbol{\Theta}_1$ of $\boldsymbol{H}_1$ satisfies
\begin{equation}\label{two'}
\boldsymbol{H}_1(\boldsymbol{\Theta}_1 u, u)(\eta,\bar{\eta})\leq \big(M+\frac{r-M}{4} \big)\Omega (\eta,\bar{\eta}).
\end{equation}

On the bundle $(E^*\otimes \det E)\otimes \det E^* $, there is a Hermitian metric $\boldsymbol{H}_1\otimes H^*$ with curvature $\boldsymbol{\Theta}_1-\Theta\otimes \Id_{E^*\otimes \det E}$. As a result of (\ref{one'}) and (\ref{two'}), we deduce that for $\eta\in T^{1,0}_{z_0}X$, and $u\in E_{z_0}^*\otimes (\det E_{z_0})$ with $\boldsymbol{H}_1(u,u)=1$, 
\begin{equation}
\boldsymbol{H}_1(\boldsymbol{\Theta}_1 u, u)(\eta,\bar{\eta})    -\Theta(\eta,\bar{\eta})\leq \big(M+\frac{r-M}{4} \big)\Omega (\eta,\bar{\eta})
-\big( r-\frac{r-M}{4} \big)\Omega(\eta,\bar{\eta});
\end{equation}
the term on the right is negative. So the Hermitian metric $\boldsymbol{H}_1\otimes H^*$ is Griffiths negative at $z_0$ an arbitrary point. Hence $\boldsymbol{H}_1\otimes H^*$ is Griffiths negative on $X$, and the bundle $E$ is Griffiths positive.

\section{Examples}\label{example}
\begin{example}
We provide here an example where the assumptions in Theorems \ref{thm 1},\ref{thm 2},\ref{thm 3}, and \ref{thm 4} are satisfied. Let $L$ be a line bundle with a metric $H$ whose curvature $\Theta>0$. Let $E=L^9 \bigoplus L^8\bigoplus  L^7$ a vector bundle of rank $r=3$. The induced metric $(H^*)^9 \bigoplus (H^*)^8 \bigoplus (H^*)^7 $ on the dual bundle $E^*$ has curvature $\Theta(E^*)=(-9\Theta) \bigoplus (-8\Theta) \bigoplus (-7\Theta)$ which is Griffiths negative, so the corresponding metric $h$ on $O_{P(E^*)}(1)$ is positively curved. According to (\ref{Herm}), we see
\begin{align*}
    -\theta(h)=-q^*\Theta(E^*)|_{O_{P(E^*)}(-1)},
\end{align*}
hence we have 
\begin{equation}\label{ineq}
 7q^*\Theta \leq -\theta(h)\leq 9q^* \Theta.   
\end{equation}
 For all four theorems, we will use this metric $h$ on $O_{P(E^*)}(1)$ and take $\Omega$ to be $7\Theta$. Therefore $q^*\Omega \leq -\theta(h)$ always holds. The choice of $g$ will be different from case to case.

For Theorem \ref{thm 1}, we choose $g$ to be $h$, hence by (\ref{ineq}) and $\Omega=7\Theta$ we get 
\begin{equation}
    q^*\Omega \leq -\theta(h)=-\theta(g)\leq \frac{9}{7} q^* \Omega.
\end{equation}
To fulfill the assumption of Theorem \ref{thm 1}, we can choose $M=9/7$ which is in the interval $[1,3)$.

For Theorem \ref{thm 2}, since $E\otimes \det E^*= (L^*)^{15} \bigoplus (L^*)^{16}\bigoplus  (L^*)^{17}$ has induced curvature $(-15\Theta) \bigoplus (-16\Theta) \bigoplus (-17\Theta)$ which is Griffiths negative, the corresponding metric $g$ on $O_{P(E)}(1)\otimes p^*\det E$ is positively curved and satisfies 
\begin{equation}
    15p^*\Theta \leq -\theta(g) \leq 17p^*\Theta.
\end{equation}
Together with (\ref{ineq}) and $\Omega=7\Theta$, we have 
\begin{equation}
     \frac{17}{7} p^*\Omega \geq -\theta(g)   \text{ and } q^*\Omega \leq -\theta(h).
\end{equation}
We can choose $M=17/7$ which is in $[1,3)$.

For Theorem \ref{thm 3},
notice that $h$ is induced from $(H^*)^9 \bigoplus (H^*)^8 \bigoplus (H^*)^7 $ on $E^*$, so if we use  $(H^*)^9 \bigoplus (H^*)^8 \bigoplus (H^*)^7 $ for the Hermitian metric $G$, then the corresponding $g$ is actually $h$. Since $\Theta(\det G)=-24 \Theta$, by using (\ref{ineq}) we have 
\begin{equation}
    -(r+1)\theta(g)+q^*\Theta(\det G)=-4\theta(h)-24q^*\Theta\leq 12 q^* \Theta=\frac{12}{7} q^* \Omega.
\end{equation}
We choose $M=12/7$ which is in $[1,3)$.

Finally, for Theorem \ref{thm 4}. On $E\otimes \det E^*= (L^*)^{15} \bigoplus (L^*)^{16}\bigoplus  (L^*)^{17}$, we will use the metric $(H^*)^{15} \bigoplus (H^*)^{16} \bigoplus (H^*)^{17} $ for $G$, so $\Theta(\det G)=-48\Theta$. Moreover, 
the corresponding metric $g$ on $O_{P(E)}(1)\otimes p^*\det E$ satisfies 
\begin{equation}
    15p^*\Theta \leq -\theta(g) \leq 17p^*\Theta,
\end{equation}
so we get 
\begin{equation}
    -(r+1)\theta(g)+p^*\Theta(\det G)\leq 20 p^* \Theta= \frac{20}{7}p^* \Omega.
\end{equation}
We choose $M=20/7$ which is in $[1,3)$.
\end{example}

\begin{example}
Let $X$ be a compact Riemann surface with a Hermitian metric $\omega$. Let $E$ be an $\omega$-semistable ample vector bundle of rank $r$ over $X$. The assumptions in Theorems \ref{thm 1},\ref{thm 2},\ref{thm 3}, and \ref{thm 4} are all satisfied in this case. We will explain for only Theorems \ref{thm 2} and \ref{thm 4}. Theorems \ref{thm 1} and \ref{thm 3} can be verified similarly. By \cite[Theorem 1.7, Remark 1.8, and Theorem 1.11]{li2021hermitian}, there exists a constant $c>0$, such that for any $\delta>0$, there exists a Hermitian metric $H_\delta$ on $E$ satisfying 
\begin{equation}\label{HYM}
    (c-\delta)\Id_E \leq \sqrt{-1}\Lambda_\omega \Theta(H_\delta) \leq (c+\delta)\Id_E,
\end{equation}
where $\Lambda_\omega $ is the contraction with respect to $\omega$. Since $X$ is a Riemann surface, $\Lambda_\omega $ locally is multiplication by a positive function.

For Theorem \ref{thm 2}, we choose $\delta =c/5r$. The Hermitian metric $H^*_\delta$ on $E^*$ induces a metric $h$ on $O_{P(E^*)}(1)$. Due to (\ref{Herm}), we see
\begin{align}\label{i guess}
    -\theta(h)=-q^*\Theta(H^*_\delta)|_{O_{P(E^*)}(-1)};
\end{align}
combining with (\ref{HYM}), we have 
\begin{equation}\label{out of label}
 (c-\delta) q^*\omega \leq -\theta(h)\leq ( c+\delta )q^* \omega.
\end{equation}
The Hermitian metric $H_\delta\otimes \det H^*_\delta$ on $E\otimes \det E^*$ induces a metric $g$ on $O_{P(E)}(1)\otimes p^*\det E$. Similar to (\ref{out of label}), we have 
\begin{equation}\label{g}
    -\theta(g)\leq [-(c-\delta)+r(c+\delta)]p^*\omega.
\end{equation}
If we choose $\Omega=(c-\delta)\omega$ and $M=r-1/2$, then $[-(c-\delta)+r(c+\delta)]p^*\omega\leq Mp^* \Omega$. As a result, we achieve the assumption in Theorem \ref{thm 2}: $q^*\Omega \leq -\theta(h)$ and $ -\theta(g)\leq Mp^* \Omega$.

For Theorem \ref{thm 4}, we choose $\delta=c/9r$. We still have (\ref{out of label}). The Hermitian metric $G$ on $E\otimes \det E^*$ is taken to be $H_\delta\otimes \det H^*_\delta$, so we get 
\begin{align*}
    &-(r+1)\theta(g)+p^*\Theta(\det G)\\
    =&
    -(r+1)\big[p^*\Theta(H_\delta)|_{O_{P(E)}(-1)}-p^*\Theta(\det H_\delta)\big]-(r-1)p^*\Theta(\det H_\delta)
        \\
        \leq& \big[-(r+1)(c-\delta)+2r(c+\delta)\big]p^*\omega.
\end{align*}
If we choose $\Omega=(c-\delta)\omega$ and $M=r-1/2$, then $[-(r+1)(c-\delta)+2r(c+\delta)]p^*\omega\leq M p^*\Omega$. So the assumption of Theorem \ref{thm 4} is satisfied. 

In light of \cite[Theorem 1.7]{li2021hermitian}, it is possible to modify our theorems so that semistability is not needed in this example.

\end{example}

\bibliographystyle{amsalpha}
\bibliography{Dominion}

\textsc{Institute of Mathematics, Academia Sinica, Taipei, Taiwan}

\texttt{\textbf{krwu@gate.sinica.edu.tw}}

\end{document}